\newcommand{\RR}{\mathbb R}
\newcommand{\ZZ}{\mathbb Z}
\newcommand{\pat}{\partial_t}
\newcommand{\pax}{\partial_x}
\newcommand{\vertiii}[1]{{\left\vert\kern-0.25ex\left\vert\kern-0.25ex\left\vert #1 
    \right\vert\kern-0.25ex\right\vert\kern-0.25ex\right\vert}}
\newcounter{comentcount}
\newcounter{teocount}
\newtheorem{lem}{Lemma}
\newtheorem{prop}{Proposition}
\newtheorem{thm}[teocount]{Theorem}
\newtheorem{rem}{Remark}
\title[]{On a drift-diffusion system for semiconductor devices}
\author[R. Granero-Belinch\'{o}n]{Rafael Granero-Belinch\'{o}n}
\email{rgranero@math.ucdavis.edu}
\address{Department of Mathematics, University of California, Davis, CA 95616, USA}
\begin{document}

\begin{abstract}
In this note we study a fractional Poisson-Nernst-Planck equation modeling a semiconductor device. We prove several decay estimates for the Lebesgue and Sobolev norms in one, two and three dimensions. We also provide the first term of the asymptotic expansion as $t\rightarrow\infty$.
\end{abstract}

\maketitle 

\section{Introduction}
We consider the drift-diffusion system given below: 
	\begin{equation}\label{eq1}\begin{cases}
		\pat u + (-\Delta)^{\frac{\alpha}{2}} u + \nabla \cdot (u \nabla \psi) = 0, \text{ for }(x,t) \in \RR^d\times\RR^+ \\
		\pat v + (-\Delta)^{\frac{\beta}{2}} v - \nabla \cdot (v \nabla \psi) = 0, \text{ for }(x,t) \in \RR^d\times\RR^+ \\
		\Delta \psi = u-v , \text{ for }(x,t) \in \RR^d\times\RR^+ \\
		u(x,0)=u_0, v(x,0)=v_0, x \in \RR^d
	\end{cases}\end{equation}
where $u$, $v$, and $\psi$ are functions of space and time, the dimension $d\in \ZZ^+$ with $d\leq3$, $0<\alpha,\beta<2$, and, if we denote the Fourier transform of the function $\phi$ by $\hat{\phi}$, then the fractional Laplacian is defined by
$$
\widehat{(-\Delta)^{\frac{\alpha}{2}} \phi} = |\xi|^\alpha\hat{\phi}.
$$
The unknown functions $u(x,t)$ and $v(x,t)$ represent the density of electrons and positive holes in the semiconductor, respectively. Finally, the function $\psi$ models the electromagnetic potential due to charges in a semiconductor. The fractional Laplacians are related to random trajectories, generalizing the concept of Brownian motion, which may contain jump discontinuities (the, so-called, $\alpha$-stable L\'evy processes). As an electron in a semiconductor may jump from a dopant into another, a nonlocal diffusion akin to the fractional Laplacian arises naturally.

\subsection{Prior results on (\ref{eq1})}
Mock \cite{mock1975asymptotic} first considered the drift-diffusion system \eqref{eq1} with $\alpha=\beta=2$ on a bounded domain with the Neumann boundary condition (see also He, Gamba, Lee \& Ren,\cite{he2013modeling} and Liu \& Wang \cite{liu2010poisson}) A similar equation has been studied by Rodr\'iguez \& Ryzhik in a very different context \cite{rodriguez2013exploring}. Fang \& Ito \cite{fang1995global} proved the existence of a global weak solution in this bounded domain (see also the work by Bothe, Fischer, Pierre, \& Rolland,\cite{bothe2014global} and  Hineman, \& Ryham \cite{hineman2015very}). The asymptotic behaviour of the solution in the case $\alpha=\beta=2$ was studied by Jungel \cite{jungel1995qualitative} and Biler \& Dolbeault \cite{biler2000long}. Kurokiba \& Ogawa \cite{kurokiba2008well} and Kurokiba, Nagai \& Ogawa \cite{kurokiba2006uniform} proved the existence and uniqueness of strong solutions to the Cauchy problem. Kawashima \& Kobayashi \cite{kobayashi2008decay} derived the optimal decay estimate by applying a weighted energy method and found an asymptotic result as $t \rightarrow \infty$. In presence of an incompressible, viscous fluid, system \eqref{eq1} was studied by Schmuck \cite{schmuck2009analysis}, by Zhao, Deng \& Cui \cite{zhao2010global, zhao2011well, deng2011well}, by Bothe, Fischer, \& Saal \cite{bothe2014global2}. Very recently, Kinderlehrer, Monsaingeon, \& Xu provided a new approach to system \eqref{eq1} using that system \eqref{eq1} is a gradient flow driven by a $L\log L-$type free energy \cite{kinderlehrer2015wasserstein}. Each of these studies restricted their conclusions to $\alpha=\beta=2$. The case of nonlinear diffusion has been considered by Zinsl \cite{zinsl2015exponential} 

When $v_0\equiv0$ (so the equation for $v$ is dropped), the fractional case $0<\alpha\leq2$ of \eqref{eq1} has been studied by several authors. Yamamoto \cite{yamamoto2010asymptotic} obtained the asymptotic behavior in the local case $\alpha=2$. Yamamoto \cite{yamamoto2012large} proceeded similarly, but derived the asymptotic expansion of the solution with the fractional Laplacian in the subcritical regime $1<\alpha<2$. Yamamoto, Kato \& Sugiyama, \cite{yamamoto2014existence} showed the well-posedness and real analytic of the critical case corresponding to $\alpha=1$. Sugiyama, Yamamoto \& Kato \cite{sugiyama2015local} studied local and global existence and uniqueness of the system with the fractional Laplacian, focusing primarily on the supercritical and critical cases $0<\alpha<1$ and $\alpha=1$, respectively. Yamamoto \& Sugiyama \cite{yamamotoasymptotic, yamamoto2015asymptotic} then derived lower bounds on the decay rates of a solution to the drift-diffusion system with the fractional Laplacian $0<\alpha\leq1$ and obtained the asymptotic behavior of the solution as $ t \rightarrow \infty$. Similar systems arising in different contexts have been studied also by Li, Rodrigo \& Zhang \cite{li2010exploding}, Escudero \cite{escudero2006fractional}, Bournaveas \& Calvez \cite{bournaveas2010one}, Biler \& Karch \cite{biler2010blowup}, Biler \& Wu \cite{BilerWu}, Biler, Karch \& Woyczy{\'n}ski \cite{biler2001critical} Biler \& Woyczy{\'n}ski \cite{biler1998global, biler2007general}, Zhao \cite{zhao2015well, zhao2015optimal}, Ascasibar, Granero-Belinch\'on \& Moreno \cite{AGM} and Burczak \& Granero-Belinch\'on \cite{BG3, BG4}.

The fractional case $1<\alpha=\beta<2$ of \eqref{eq1} with general $v_0$ has been studied by Ogawa \& Yamamoto \cite{ogawa2009asymptotic}. In particular, these authors proved the global existence and the asymptotic behavior of solutions. To the best of our knowledge, this is the only result concerning \eqref{eq1}. Thus, by studying \eqref{eq1}, this paper generalizes the current results in \cite{ogawa2009asymptotic} in two different aspects:

\begin{enumerate}
\item it allows for diffusions with different strengths for $u$ and $v$ \emph{i.e.} $\alpha$ is not necessarily equal to $\beta$. The cases $\alpha\neq\beta$ and $\alpha=\beta$ present several differences at the level of the $H^2$ Sobolev norm and some \emph{closeness} hypothesis needs to be imposed (see Theorem \ref{thdecay3}). 
\item it allows for diffusions in the whole range $0<\alpha,\beta<2$. In particular, our work covers the supercritical and critical range $0<\alpha,\beta\leq1$.
\end{enumerate}

\subsection{Preliminaries}
\subsubsection{Singular integral operators}
We write $\Lambda^\alpha =(-\Delta)^{\frac{\alpha}{2}}$, \emph{i.e.}
\begin{equation}\label{eq:7}
\widehat{\Lambda^{\alpha}u}(\xi)=|\xi|^\alpha \hat{u}(\xi)
\end{equation}
where $\hat{\cdot}$ denotes the usual Fourier transform. As a singular integral operator, the operator $\Lambda^\alpha$ possesses the kernel\begin{align}\label{eq:8}
\Lambda^\alpha u(x)&=c_{\alpha,d}\text{P.V.}\int_{\RR^d}\frac{u(x)-u(x-\eta)d\eta}{|\eta|^{d+\alpha}},
\end{align}
with
$$
c_{\alpha,d}=\frac{4^s\Gamma(d/2+\alpha)}{\pi^{d/2}|\Gamma(-\alpha)|}>0,
$$
where 
$$
\Gamma(z)=\int_0^\infty t^{z-1}e^{-t}dt
$$ 
is the $\Gamma$ function.

\subsubsection{Functional spaces}
We write $L^p(\RR^d)$ for the usual Lebesgue spaces
$$
L^p(\RR^d)=\left\{u\text{ measurable s.t. }\int_{\RR^d}|u(x)|^pdx<\infty\right\},
$$
with norm
$$
\|u\|_{L^p}^p=\int_{\RR^d}|u(x)|^pdx.
$$

We write $H^s(\RR^d)$ for the usual $L^2$-based Sobolev spaces:
$$
H^s(\RR^d)=\left\{u\in L^2(\RR^d) \text{ s.t. }(1+|\xi|^s)\hat{u}\in L^2(\RR^d)\right\},
$$
with the norm
$$
\|u\|_{H^s}^2=\|u\|_{L^2}^2+\|u\|_{\dot{H}^s}^2, \quad \|u\|_{\dot{H}^s}=\|\Lambda^s u\|_{L^2}.
$$

\subsection{Plan of the paper}
This note is organized as follows: in section \ref{sec1} we state our results. In section \ref{sec2} we prove the decay in the $L^p$ spaces. In sections \ref{sec3}, \ref{sec4} and \ref{sec5}, we prove the decay of the Sobolev norms. Then, in section \ref{sec6}, we provide the first term in the asymptotic expansion. Finally, in Appendix \ref{sec7}-\ref{sec9} we provide certain inequalities and estimates for fractional Laplacian that are used in the paper and may be interesting by themselves.

\section{Main results}\label{sec1}
Our first result concerns the global existence and decay of the solutions to \eqref{eq1}:\begin{thm}\label{thdecay}Let $0<\alpha,\beta<2$, $d\in \ZZ^+$ with $d\leq3$, be fixed constants and
$$
u_0(x),v_0(x)\in L^1(\RR^d)\cap H^{4}(\RR^d)
$$ 
be the initial data. Then there exists $(u(x,t),v(x,t))$ a global smooth solution to \eqref{eq1} satisfying
$$
u\in L^\infty(0,T;L^1(\RR^d)\cap H^4(\RR^d))\cap L^2(0,T;H^{4+\alpha/2}(\RR^d)),
$$
$$
v\in L^\infty(0,T;L^1(\RR^d)\cap H^4(\RR^d))\cap L^2(0,T;H^{4+\beta/2}(\RR^d)),
$$
for every $0<T<\infty$. Furthermore, the functionals
$$
\mathcal{F}_p[u(t),v(t)]:=\|u(t)\|^p_{L^p}+\|v(t)\|^p_{L^p},\;1\leq p<\infty,
$$
$$
\mathcal{F}_\infty[u(t),v(t)]:=\|u(t)\|_{L^\infty}+\|v(t)\|_{L^\infty},
$$
verify
$$
\mathcal{F}_p[u(t),v(t)]\leq \mathcal{F}_p[u_0,v_0],\:1\leq p\leq\infty,
$$
and there exist constants $K$ and $C_\infty$ such that 
$$
\mathcal{F}_\infty[u,v]\leq \frac{\mathcal{F}_\infty[u_0,v_0]}{\left(1+Kt\right)^{d/\max\{\alpha,\beta\}}},
$$
$$
\mathcal{F}_p[u,v]\leq \left(\|u_0\|_{L^1}+\|v_0\|_{L^1}\right)\frac{C_\infty^{p-1}}{\left(1+t\right)^{\frac{d}{\max\{\alpha,\beta\}}(p-1)}},
$$
$$
\|u(t)\|_{L^p}\leq \frac{\|u_0\|_{L^1}^{\frac{1}{p}}C_\infty^{1-\frac{1}{p}}}{\left(1+t\right)^{\frac{d}{\max\{\alpha,\beta\}}(1-\frac{1}{p})}},
$$
$$
\|v(t)\|_{L^p}\leq \frac{\|v_0\|_{L^1}^{\frac{1}{p}}C_\infty^{1-\frac{1}{p}}}{\left(1+t\right)^{\frac{d}{\max\{\alpha,\beta\}}(1-\frac{1}{p})}}.
$$
\end{thm}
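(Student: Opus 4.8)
Set $\gamma:=\max\{\alpha,\beta\}$. The guiding principle is that every rate in the statement is the rate of the fractional heat flow with the \emph{slower} diffusion $\gamma$, that $\|u_0\|_{L^1}$ and $\|v_0\|_{L^1}$ are conserved, and that the nonlinear coupling becomes favourable once $u,v\ge 0$. I would organize the proof in four stages. \textbf{(1) Global existence and regularity.} Regularize \eqref{eq1} by Friedrichs mollifiers together with a vanishing viscosity $\varepsilon\Delta$, solve the regularized system by a contraction in $C([0,T];L^1\cap H^{4})$, and derive $\varepsilon$-uniform bounds. The drift is handled through $\nabla\psi=\nabla(-\Delta)^{-1}(v-u)$, which \emph{gains} one derivative, so standard product/commutator estimates give $\frac{d}{dt}\|u\|_{H^4}^2+\|u\|_{\dot H^{4+\alpha/2}}^2\lesssim P(\|u\|_{H^4},\|v\|_{H^4})$ (times lower norms), and likewise for $v$; in $d\le 3$ all the required Sobolev embeddings hold, $d=3$ being the borderline case. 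The solution cannot blow up because the a priori bounds of stage (2) (the $L^p$ norms do not grow) together with the $L^\infty$ decay of stage (3) close a Gronwall inequality in $H^4$. Passing to $\varepsilon\to 0$ and using parabolic smoothing from $\Lambda^\alpha$, $\Lambda^\beta$ yields the stated regularity classes.

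\textbf{(2) The $L^p$ monotonicity and $L^1$ conservation.} Test the $u$-equation with $p|u|^{p-2}u$ and the $v$-equation with $p|v|^{p-2}v$ and add. The C\'ordoba--C\'ordoba inequality gives $\int_{\RR^d}|u|^{p-2}u\,\Lambda^\alpha u\,dx\ge 0$ and $\int_{\RR^d}|v|^{p-2}v\,\Lambda^\beta v\,dx\ge 0$, while integrating the two drift terms by parts and inserting $\Delta\psi=u-v$ produces precisely $-(p-1)\int_{\RR^d}(|u|^p-|v|^p)(u-v)\,dx$. The regularized flow preserves positivity (where $u$ vanishes, $\nabla u=0$ and $\Lambda^\alpha u\le 0$), so $u,v\ge 0$ and this term is $\le 0$; hence $\frac{d}{dt}\mathcal{F}_p[u,v]\le 0$ for $1\le p<\infty$, and the case $p=\infty$ follows by a maximum principle argument (or by letting $p\to\infty$ in the $\mathcal{F}_p$ bound). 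Integrating the equations in $x$ and using $u,v\ge 0$ gives $\|u(t)\|_{L^1}=\|u_0\|_{L^1}$ and $\|v(t)\|_{L^1}=\|v_0\|_{L^1}$.

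\textbf{(3) The $L^\infty$ decay}, which is the technical heart. Testing with $u$ and with $v$ and adding yields
\begin{equation*}
\frac{d}{dt}\bigl(\|u\|_{L^2}^2+\|v\|_{L^2}^2\bigr)+2\|\Lambda^{\alpha/2}u\|_{L^2}^2+2\|\Lambda^{\beta/2}v\|_{L^2}^2\le 0 .
\end{equation*}
Apply the fractional Nash inequality (proved in the appendix) to each dissipation term; since $\|u\|_{L^2},\|v\|_{L^2}$ are bounded and $\|u\|_{L^1},\|v\|_{L^1}$ conserved, one may trade the exponents $\alpha,\beta$ for $\gamma$ and reach $y'+c\,y^{1+\gamma/d}\le 0$ for $y=\|u\|_{L^2}^2+\|v\|_{L^2}^2$, i.e.\ $y(t)\lesssim(1+t)^{-d/\gamma}$. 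To upgrade to $L^\infty$ I would run a Moser--Alikakos iteration: testing with $u^{2^k-1}$ and $v^{2^k-1}$, using again the positivity of the fractional dissipation, the sign of the coupling, and the fractional Nash inequality applied to $u^{2^{k-1}},v^{2^{k-1}}$, one obtains a recursion for $\mathcal{F}_{2^k}[u,v]$; an induction then gives $\mathcal{F}_{2^k}[u,v](t)\le B_k(1+t)^{-\frac{d}{\gamma}(2^k-1)}$ with $B_k^{2^{-k}}$ bounded uniformly in $k$, and $k\to\infty$ gives $\mathcal{F}_\infty[u,v]\lesssim(1+t)^{-d/\gamma}$, which after relabelling constants is the claimed bound. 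I would deliberately avoid the Duhamel/heat-kernel route here, because $\|\nabla e^{-t\Lambda^\alpha}\|_{L^1\to L^\infty}\sim t^{-(1+d)/\alpha}$ fails to be integrable at $t=0$ once $\alpha\le 1$ — precisely the new range — whereas the energy iteration sidesteps this.

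\textbf{(4) The remaining $L^p$ rates} are then immediate: $\|u\|_{L^p}^p\le\|u\|_{L^1}\|u\|_{L^\infty}^{p-1}=\|u_0\|_{L^1}\|u\|_{L^\infty}^{p-1}$ and $\|u\|_{L^p}\le\|u\|_{L^1}^{1/p}\|u\|_{L^\infty}^{1-1/p}$, together with stage (3) and the analogous inequalities for $v$, give all four displayed estimates. \textbf{Main obstacle.} The delicate points are (i) the global existence in the supercritical/critical range $0<\alpha,\beta\le 1$, where the weak diffusion forces one to lean on the a priori $L^p$ bounds and the $L^\infty$ decay in order to close the high-order energy estimate, and (ii) the bookkeeping in the Moser--Alikakos iteration that produces the \emph{uniform} rate $d/\max\{\alpha,\beta\}$, namely keeping the constants $B_k^{2^{-k}}$ bounded and correctly carrying the two distinct exponents $\alpha\neq\beta$ through a single recursion.
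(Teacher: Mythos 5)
Your proposal is correct in outline and its Stages (1), (2) and (4) coincide with the paper's proof: local/global existence by regularization and continuation, positivity plus the symmetrized (Stroock--Varopoulos/C\'ordoba--C\'ordoba) sign of the fractional dissipation and the good sign of the coupling term $-(p-1)\int(u^p-v^p)(u-v)\,dx$, and finally interpolation $\|u\|_{L^p}\le\|u\|_{L^1}^{1/p}\|u\|_{L^\infty}^{1-1/p}$ against the conserved mass. The genuine divergence is in Stage (3). The paper does not use Nash plus a Moser--Alikakos iteration; it works directly with the functional $\mathcal{F}_\infty[u,v]=u(x_t)+v(y_t)$ evaluated at the maximum points, using Rademacher's theorem to differentiate $t\mapsto\|u(t)\|_{L^\infty}$, the sign of $\Lambda^\alpha u(x_t)$ and of the coupling $-(u(x_t)-v(y_t))^2$, and then the quantitative pointwise lower bound $\Lambda^\alpha u(x_t)\ge c\,u(x_t)^{1+\alpha/d}/\|u_0\|_{L^1}^{\alpha/d}$ (Lemma \ref{lemaaux3}) to close a single ODE $\frac{d}{dt}\mathcal{F}_\infty\le -C\,\mathcal{F}_\infty^{1+\gamma}$, with $\gamma$ chosen adaptively as $\alpha/d$ or $\beta/d$ according to which of $u(x_t),v(y_t)$ dominates; that adaptive choice is the entire bookkeeping needed to merge the two exponents into $d/\max\{\alpha,\beta\}$. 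Your route is viable and more classical --- the $L^2$ energy estimate does close with the good sign $-\int(u+v)(u-v)^2\le0$, and the iteration avoids the pointwise lemma --- but it shifts the difficulty into the uniform control of the iteration constants $B_k^{2^{-k}}$, which is considerably heavier than the paper's one-line ODE. Two smaller remarks: letting $p\to\infty$ in $\mathcal{F}_p\le\mathcal{F}_p[u_0,v_0]$ only yields the bound on $\max\{\|u\|_{L^\infty},\|v\|_{L^\infty}\}$, not on the sum $\mathcal{F}_\infty$ as stated, so the maximum-principle variant you mention parenthetically is in fact the necessary one (and is what the paper uses, exploiting that the cross terms combine into $-(u(x_t)-v(y_t))^2$); and your instinct to avoid the Duhamel route for $\alpha\le1$ matches the paper, which likewise relies on energy/pointwise methods rather than the semigroup for this theorem.
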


\begin{rem}In the case where the smooth initial data is 
$$
u_0(x),v_0(x)\in L^p(\RR^d),\; 1<p<\infty, 
$$ 
following the proof of Theorem \ref{thdecay}, we have the pointwise estimates
\begin{align*}
\Lambda^\alpha u(x_t) &\geq c(d,\alpha,p)\frac{u(x_t)^{1+\alpha p/d}}{\|u(t)\|^{\alpha p/d}_{L^p}}\geq c(d,\alpha,p)\frac{u(x_t)^{1+\alpha p/d}}{\mathcal{F}_p(u_0,v_0)^{\alpha /d}}\\
\Lambda^\beta v(y_t) &\geq c(d,\beta,p)\frac{v(y_t)^{1+\beta p/d}}{\|v(t)\|^{\beta p/d}_{L^p}}\geq c(d,\beta,p)\frac{v(y_t)^{1+\beta p/d}}{\mathcal{F}_p(u_0,v_0)^{\beta /d}},
\end{align*}
where $x_t$ and $y_t$ are such that
$$
\|u(t)\|_{L^\infty}=u(x_t,t),\;\|v(t)\|_{L^\infty}=v(y_t,t).
$$
Thus, instead of \eqref{Decayinfty1}, we have that
\begin{equation}\label{Decayinftyp}
\frac{d}{dt} \mathcal{F}_\infty[u,v] \leq -c(d,\alpha,p)\frac{u(x_t)^{1+\alpha p/d}}{\mathcal{F}_p(u_0,v_0)^{\alpha /d}}-c(d,\beta,p)\frac{v(y_t)^{1+\beta p/d}}{\mathcal{F}_p(u_0,v_0)^{\beta /d}}.
\end{equation}
\end{rem}

Our second result studies the behavior of Sobolev spaces $H^s$ for $0<s\leq1$. 

\begin{thm}\label{thdecay2}Let $0<\alpha,\beta<2$, $d\in \ZZ^+$ with $d\leq3$, be fixed constants and
$$
u_0(x),v_0(x)\in L^1\cap H^4
$$ 
be the initial data. Then, there exists a constant $C$ such that the solution $(u(x,t),v(x,t))$ to \eqref{eq1} verifies
$$
\|u(t)\|_{\dot{H}^{s}}+\|v(t)\|_{\dot{H}^{s}}\leq \frac{C}{(1+t)^{\frac{d}{\max\{\alpha,\beta\}}\frac{1-s}{2}}},\,\forall\,t\geq 0,\,0\leq s\leq1.
$$
\end{thm}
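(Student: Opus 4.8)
My plan is to reduce the whole range $0\le s\le 1$ to the two endpoints $s=0$ and $s=1$ by interpolation. The endpoint $s=0$ is already contained in Theorem \ref{thdecay}: taking $p=2$ there gives $\|u(t)\|_{L^2}+\|v(t)\|_{L^2}\le C(1+t)^{-\frac{d}{\max\{\alpha,\beta\}}\frac12}$. The content is therefore the endpoint $s=1$, for which I would prove that $\|u(t)\|_{\dot H^1}^2+\|v(t)\|_{\dot H^1}^2$ stays bounded uniformly in $t\ge 0$; this is consistent with the stated rate, which is $(1+t)^0$ when $s=1$. Granting the $\dot H^1$ bound, the elementary interpolation inequality $\|w\|_{\dot H^s}\le \|w\|_{L^2}^{1-s}\|w\|_{\dot H^1}^{s}$ (H\"older in frequency) applied to $w=u$ and $w=v$ yields
\[
\|u(t)\|_{\dot H^s}+\|v(t)\|_{\dot H^s}\le C\,(1+t)^{-\frac{d}{\max\{\alpha,\beta\}}\frac{1-s}{2}},\qquad 0\le s\le 1,
\]
with $C$ independent of $s$, which is the assertion.

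To obtain the $\dot H^1$ bound I would run an energy estimate: testing the first equation of \eqref{eq1} against $-\Delta u$, the second against $-\Delta v$, and using $\Delta\psi=u-v$, one gets
\[
\frac12\frac{d}{dt}\Big(\|u\|_{\dot H^1}^2+\|v\|_{\dot H^1}^2\Big)+\|u\|_{\dot H^{1+\alpha/2}}^2+\|v\|_{\dot H^{1+\beta/2}}^2=\int_{\RR^d}\Delta u\,\nabla\cdot(u\nabla\psi)\,dx-\int_{\RR^d}\Delta v\,\nabla\cdot(v\nabla\psi)\,dx .
\]
I would then integrate by parts repeatedly in the right-hand side so as to take derivatives off the highest-order factor. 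This produces three types of terms. First, cubic terms $-\int u|\nabla u|^2$ and $-\int v|\nabla v|^2$, which are $\le 0$ because $u,v\ge 0$ (a property preserved by the system, the densities being nonnegative), so they may simply be discarded. Second, terms of the form $\int w|\nabla u|^2$, $\int w|\nabla v|^2$ and $\int(u+v)\nabla u\cdot\nabla v$ with $w\in\{u,v\}$; these I would estimate by products $\|w\|_{L^p}\|\nabla u\|_{L^{q}}\|\nabla v\|_{L^{r}}$ and close by choosing the Lebesgue exponents so that $\|\nabla u\|_{L^{q}}$, $\|\nabla v\|_{L^{r}}$ are absorbed (up to lower-order decaying remainders) into $\|u\|_{\dot H^{1+\alpha/2}}$, $\|v\|_{\dot H^{1+\beta/2}}$ via Sobolev embedding, while the prefactor $\|w\|_{L^p}$ decays by Theorem \ref{thdecay}; this is possible throughout the range $d\le 3$, $0<\alpha,\beta<2$. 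Third, a Hessian term $\int\nabla u\cdot\nabla^2\psi\,\nabla u$ and its $v$-analogue, where $\nabla^2\psi$ is a Calder\'on--Zygmund transform of $u-v$, hence $\|\nabla^2\psi\|_{L^p}\lesssim \|u-v\|_{L^p}$ for $1<p<\infty$, and the term is again of the type (decaying coefficient)$\times$(dissipation)$+$(decaying). Collecting everything yields a differential inequality of the shape
\[
\frac{d}{dt}\Big(\|u\|_{\dot H^1}^2+\|v\|_{\dot H^1}^2\Big)\le \big(\varepsilon+C h(t)-2\big)\Big(\|u\|_{\dot H^{1+\alpha/2}}^2+\|v\|_{\dot H^{1+\beta/2}}^2\Big)+C g(t),
\]
with $h(t)\to 0$ polynomially; fixing a time $T_0$ after which $\varepsilon+Ch(t)\le 1$, the right-hand side is $\le Cg(t)$ on $[T_0,\infty)$, while on $[0,T_0]$ the left-hand side is controlled by the local-in-time bound $u,v\in L^\infty(0,T;H^4)$ from Theorem \ref{thdecay}; integrating gives the uniform $\dot H^1$ bound provided $g\in L^1(0,\infty)$. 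In the borderline configurations (essentially $d=1$ with $\max\{\alpha,\beta\}$ close to $2$, where the Sobolev embedding is tight and $g$ is only barely non-integrable) I would instead bootstrap, inserting a first rough decay rate for $\|u\|_{\dot H^1}$, $\|v\|_{\dot H^1}$ back into the nonlinear terms and iterating.

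The step I expect to be the main obstacle is the treatment of the drift terms above. The nonlinearity $\nabla\cdot(u\nabla\psi)$ is quadratic and carries one derivative, whereas the dissipation is fractional of order $\alpha<2$; at the $\dot H^1$ level the crude estimate places two derivatives on $u$ (or $v$) and one on $\psi$ and looks supercritical, so it cannot be closed by the dissipation alone. Everything hinges on using three ingredients together: the sign $u,v\ge 0$ to discard the genuinely dangerous cubic contributions; the Poisson equation $\Delta\psi=u-v$ to spend one derivative on $\psi$ for free; and the $L^p$ decay of Theorem \ref{thdecay} to make the coefficients in front of the surviving $\dot H^{1+\alpha/2}$, $\dot H^{1+\beta/2}$ norms small for large time. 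Making the bookkeeping of Lebesgue exponents work simultaneously for all $d\le 3$ and for the whole range $0<\alpha,\beta<2$, especially near the endpoint where the embedding $\dot H^{1+\alpha/2}\hookrightarrow \dot W^{1,q}$ degenerates, is the technical heart of the proof.
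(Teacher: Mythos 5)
Your proposal follows essentially the same route as the paper: a uniform-in-time $\dot H^1$ energy bound obtained by absorbing the drift terms into the fractional dissipation with the help of the $L^p$ decay from Theorem \ref{thdecay} (the paper organizes the $d=2,3$ case via the commutator $[\Lambda,\nabla\psi]$, Kato--Ponce and the fractional Leibniz rule rather than direct integration by parts, but the resulting terms and the absorption mechanism are the same), followed by the interpolation $\|f\|_{\dot H^s}\le C\|f\|_{L^2}^{1-s}\|f\|_{\dot H^1}^{s}$ against the $L^2$ decay of Theorem \ref{thdecay}. No bootstrap is needed even in the borderline configurations: the elementary inequality $\|\Lambda f\|_{L^2}^2\le\|f\|_{L^2}^2+\|f\|_{\dot H^{1+\alpha/2}}^2$ splits every surviving term into a piece absorbed by the dissipation for large times and a time-integrable remainder, exactly as in your differential inequality.
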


Our third result regards the higher Sobolev norm $H^s$, $1\leq s\leq 2$ and imposes restrictions on $\alpha$ and $\beta$:
\begin{thm}\label{thdecay3}Let $0<\alpha,\beta<2$, $d\in\ZZ^+$, $d\leq3$, be fixed constants such that
\begin{equation}\label{condition}
\frac{2d}{4+3\min\{\alpha,\beta\}}< 1,
\end{equation}
\begin{equation}\label{condition2}
\frac{\min\{\alpha,\beta\}}{\max\{\alpha,\beta\}}\frac{d}{4}\left(2+\frac{2d}{4+3\max\{\alpha,\beta\}-2d}\right)>1,
\end{equation}
\begin{equation}\label{condition3}
\Gamma=\frac{d}{4+3\min\{\alpha,\beta\}-d}\left(2+\frac{4|\alpha-\beta|}{4+3\min\{\alpha,\beta\}}\right)\leq2,
\end{equation}
and
$$
u_0(x),v_0(x)\in L^1\cap H^4
$$ 
be the initial data. Furthermore, when $\Gamma<2$, we assume that
\begin{equation}\label{condition4}
\frac{4+3\max\{\alpha,\beta\}}{4+3\min\{\alpha,\beta\}}<1+\frac{d}{4+3\max\{\alpha,\beta\}-d},
\end{equation}
and
\begin{equation}\label{condition5}
\frac{\min\{\alpha,\beta\}}{\max\{\alpha,\beta\}}\frac{d}{2}\geq1.
\end{equation}
Then, there exists a constant $C$ such that the solution $(u(x,t),v(x,t))$ to \eqref{eq1} verifies
$$
\|u(t)\|_{\dot{H}^{s}}+\|v(t)\|_{\dot{H}^{s}}\leq \frac{C}{(1+t)^{\frac{d}{\max\{\alpha,\beta\}}\frac{2-s}{4}}},\,\forall\,t\geq 0,\,0\leq s\leq 2.
$$
\end{thm}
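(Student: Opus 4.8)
\emph{Proof strategy.} The whole statement reduces to the single uniform-in-time bound
$$\sup_{t\ge0}E(t)<\infty,\qquad E(t):=\|u(t)\|_{\dot H^2}^2+\|v(t)\|_{\dot H^2}^2 .$$
Indeed, once this is known, the interpolation inequality $\|w\|_{\dot H^s}\le\|w\|_{L^2}^{1-s/2}\|w\|_{\dot H^2}^{s/2}$ (immediate from Plancherel and H\"older) together with the $L^2$ decay of Theorem \ref{thdecay} with $p=2$, namely $\|u(t)\|_{L^2}+\|v(t)\|_{L^2}\le C(1+t)^{-\frac{d}{2\max\{\alpha,\beta\}}}$, yields
$$\|u(t)\|_{\dot H^s}+\|v(t)\|_{\dot H^s}\le C(1+t)^{-\frac{d}{2\max\{\alpha,\beta\}}\left(1-\frac s2\right)}=\frac{C}{(1+t)^{\frac{d}{\max\{\alpha,\beta\}}\frac{2-s}{4}}}$$
for all $0\le s\le2$, which is exactly the asserted estimate (note that at $s=2$ this is just the boundedness we are claiming). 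So the task is to prove $\sup_t E(t)<\infty$; the hypotheses \eqref{condition}--\eqref{condition5} serve only to make this possible.

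The first step is the energy identity. Applying $\Lambda^2$ to the first two equations of \eqref{eq1}, testing against $\Lambda^2u$ and $\Lambda^2v$ respectively, adding, and using that $\Lambda^2$ commutes with the linear operators, one obtains (with the regularity of Theorem \ref{thdecay} justifying the manipulations)
$$\frac12\frac{d}{dt}E(t)+\|u\|_{\dot H^{2+\frac\alpha2}}^2+\|v\|_{\dot H^{2+\frac\beta2}}^2=-\int_{\RR^d}\Lambda^2u\,\Lambda^2\na\cdot(u\na\psi)\,dx+\int_{\RR^d}\Lambda^2v\,\Lambda^2\na\cdot(v\na\psi)\,dx=:\mathcal{N}(t).$$
Using $\Lambda^2=-\Delta$ and the Leibniz rule, integrating by parts, and exploiting $\Delta\psi=u-v$ (so $\na\Delta\psi=\na(u-v)$, $\na^2\psi$ is a bounded matrix of Riesz transforms of $u-v$, and $\na\psi=\na\Delta^{-1}(u-v)$ is smoothing of order one, bounded into $L^\infty$ by $\|u-v\|_{L^{p_1}}+\|u-v\|_{L^{p_2}}$ for suitable $p_1<d<p_2$ via Hardy--Littlewood--Sobolev), $\mathcal{N}(t)$ decomposes into finitely many trilinear terms; after the integrations by parts the worst of them are of the form $\int(\text{two derivatives of }u\text{ or }v)\cdot(\text{two derivatives of }u\text{ or }v)\cdot(u-v)$ and similar, plus strictly lower-order terms carrying the smoothing factor $\na\psi$ or $\Delta^{-1}(u-v)$. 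One cannot simply absorb these into the dissipation: since $\alpha,\beta<2$, the gain $\dot H^{2+\alpha/2}$ over $\dot H^2$ is too weak (in particular $\dot H^3$ is out of reach of the dissipation), so the decay of the low-order norms must be used.

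The core of the argument is the estimate of $\mathcal{N}(t)$. For each term I would apply the fractional product and commutator inequalities recorded in the Appendix, the Calder\'on--Zygmund bound $\|\na^2\psi\|_{L^p}\le C\|u-v\|_{L^p}$ for $1<p<\infty$, and Gagliardo--Nirenberg interpolation to split the derivatives. The structural point is that every low-order factor can be taken to be a norm of $u$ or $v$ in some $L^p$ or in $\dot H^\sigma$ with $0\le\sigma\le1$, which decays at the explicit rate furnished by Theorems \ref{thdecay} and \ref{thdecay2}, while the single top-order factor is interpolated between the dissipation norms $\dot H^{2+\alpha/2},\dot H^{2+\beta/2}$ and those decaying low norms. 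After a Young inequality absorbing a fraction of $\|u\|_{\dot H^{2+\alpha/2}}^2+\|v\|_{\dot H^{2+\beta/2}}^2$ into the left-hand side, and interpolating $\|u\|_{\dot H^2},\|v\|_{\dot H^2}$ between $L^2$ (decaying, so the exponent $b$ below is $\ge0$) and the dissipation, one is left with a differential inequality of the schematic shape
$$\frac{d}{dt}E(t)+c\,(1+t)^{b}\,E(t)^{1+\delta'}\le C\,(1+t)^{-a}\left(1+E(t)+E(t)^{1+\delta}\right),$$
where $1+\delta'$ is essentially $1+\tfrac{\min\{\alpha,\beta\}}{4}$ and $a,b,\delta,\delta'$ are explicit functions of $d,\alpha,\beta$. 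Conditions \eqref{condition} and \eqref{condition3} ($\Gamma\le2$) guarantee that all Gagliardo--Nirenberg exponents actually used lie in $[0,1]$ — this is also why the denominators $4+3\max\{\alpha,\beta\}-2d$ and $4+3\min\{\alpha,\beta\}-d$ occurring in \eqref{condition2}--\eqref{condition4} are positive — while \eqref{condition2} and, when $\Gamma<2$, \eqref{condition4}--\eqref{condition5}, guarantee both $\delta\le\delta'$ and $a>1$, so that the left-hand dissipative term dominates the right-hand side once $E$ is large; a barrier/continuation argument then gives $\sup_{t\ge0}E(t)<\infty$, completing the proof.

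The hard part, and the only reason the hypotheses are nontrivial, is the bookkeeping of the cross terms when $\alpha\neq\beta$: there a factor coming from the equation with the larger diffusion exponent multiplies a top-order factor carrying only the weaker dissipation $\dot H^{2+\min\{\alpha,\beta\}/2}$, whereas the available low-order decay is governed by the faster rate $d/\max\{\alpha,\beta\}$, and reconciling these mismatched scales through interpolation is exactly what forces the factor $|\alpha-\beta|$ in $\Gamma$ (condition \eqref{condition3}) and the ratio $\min\{\alpha,\beta\}/\max\{\alpha,\beta\}$ in \eqref{condition2} and \eqref{condition5}. I expect the main technical obstacle to be checking that the resulting differential inequality closes \emph{uniformly in $t$} in all three dimensions $d\in\{1,2,3\}$, in particular that the super-linear error powers never exceed the dissipative power; in the symmetric case $\alpha=\beta$ this collapses to a single standard interpolation and the conditions \eqref{condition2}--\eqref{condition5} simplify drastically.
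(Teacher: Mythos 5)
Your reduction of the theorem to the single bound $\sup_{t\ge0}\big(\|u(t)\|_{\dot H^2}^2+\|v(t)\|_{\dot H^2}^2\big)<\infty$ via the interpolation $\|w\|_{\dot H^s}\le\|w\|_{L^2}^{1-s/2}\|w\|_{\dot H^2}^{s/2}$ and the $L^2$ decay of Theorem \ref{thdecay} is correct and is exactly how the paper concludes. The gap is in the part you leave schematic, which is where the entire content of the theorem lives. The five hypotheses \eqref{condition}--\eqref{condition5} are not generic ``make the exponents work'' conditions: each is calibrated to a specific step of a specific term-by-term estimate. In the paper, after the commutator/Kato--Ponce estimates and the sign of the term $-\tfrac12\int|\Delta u|^2(u-v)\,dx$ are exploited, the dangerous contributions reduce to four explicit products $J_1,\dots,J_4$ of the form $\|u\|_{\dot H^{2+\alpha/2}}^{1+\mu}\|u\|_{\dot H^{1-\alpha/4}}^{\nu}\cdots$; condition \eqref{condition} is what makes the power of the top-order norm strictly less than $2$ so that Young's inequality with $q=2+\frac{4d}{4+3\alpha-2d}$ is admissible, \eqref{condition2} is what makes the resulting time exponents $\theta_1,\theta_2>1$ (hence integrable), \eqref{condition3} ($\Gamma\le2$) is precisely the condition $\lambda\le2$ that legitimizes a \emph{second} Young step on the cross terms $J_3,J_4$ (where a power $\lambda$ lands on $\|v\|_{\dot H^{2+\beta/2}}$), and \eqref{condition4}--\eqref{condition5} make the leftover time exponent $\theta_3>1$. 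Asserting that your unspecified exponents $a,b,\delta,\delta'$ satisfy $a>1$ and $\delta\le\delta'$ ``by the conditions'' cannot be checked and is not a proof; carrying out this bookkeeping \emph{is} the proof.

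There is also a structural difference in how you propose to close. The paper's Young inequalities absorb every factor carrying a top-order norm entirely into the dissipation $-\|u\|_{\dot H^{2+\alpha/2}}^2-\|v\|_{\dot H^{2+\beta/2}}^2$, while the low-order factors are bounded by Theorem \ref{thdecay2} as pure functions of $t$; the outcome is $\frac{d}{dt}E(t)\le C(1+t)^{-\Theta}$ with $\Theta>1$, which one simply integrates --- no barrier or continuation argument and no $E$-dependence on the right-hand side. Your proposed inequality retains $E(t)^{1+\delta}$ on the right and a nonlinear dissipation $(1+t)^bE(t)^{1+\delta'}$ on the left, which requires the additional comparisons $\delta\le\delta'$ and $b\ge0$ that the stated hypotheses were not designed to deliver. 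If you pursue your route you must derive those exponents explicitly and verify them against \eqref{condition}--\eqref{condition5}; the safer path is the paper's, where the only thing to check is that each time exponent exceeds $1$.
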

Notice that this result imposes restrictions on the difference $\alpha-\beta$. This result suggests that a big disparity in the strengths of the diffusive operators may lead to obstructions in higher Sobolev norms. 

Our next theorem concerns the case of arbitrarily large Sobolev norms:
\begin{thm}\label{thdecay4}Let $0<\alpha,\beta<2$, $d\in \ZZ^+$ with $d\leq3$, be fixed constants and
$$
u_0(x),v_0(x)\in L^1\cap H^s,\;s\geq2,\,s\in\RR
$$ 
be the initial data. Assume that $\alpha,\beta$ and $d$ satisfy the same hypothesis as in Theorem \ref{thdecay3}. Then, there exists a constant $C$ such that the solution $(u(x,t),v(x,t))$ to \eqref{eq1} verifies
$$
\|u(t)\|_{\dot{H}^{r}}+\|v(t)\|_{\dot{H}^{r}}\leq \frac{C}{(1+t)^{\frac{d}{\max\{\alpha,\beta\}}\frac{s-r}{2s}}},\,\forall\,t\geq 0,\,0\leq r\leq s.
$$
\end{thm}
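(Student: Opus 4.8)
\textbf{Proof strategy for Theorem \ref{thdecay4}.}

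The plan is to first establish a uniform\nobreakdash-in\nobreakdash-time bound for the top norm, $\sup_{t\ge0}\bigl(\|u(t)\|_{\dot H^{s}}+\|v(t)\|_{\dot H^{s}}\bigr)\le C$, and then to recover the decay of every intermediate norm by interpolation. Indeed, granting such a bound, the elementary inequality $\|f\|_{\dot H^{r}}\le\|f\|_{L^2}^{1-r/s}\|f\|_{\dot H^{s}}^{r/s}$ combined with the $L^2$ decay $\|u(t)\|_{L^2}+\|v(t)\|_{L^2}\le C(1+t)^{-\frac{d}{2\max\{\alpha,\beta\}}}$ furnished by Theorem \ref{thdecay} gives, for all $0\le r\le s$,
$$
\|u(t)\|_{\dot H^{r}}+\|v(t)\|_{\dot H^{r}}\le C(1+t)^{-\frac{d}{2\max\{\alpha,\beta\}}\left(1-\frac{r}{s}\right)}=\frac{C}{(1+t)^{\frac{d}{\max\{\alpha,\beta\}}\frac{s-r}{2s}}},
$$
which is exactly the asserted estimate. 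The global existence of $(u,v)$ with $u,v\in L^\infty(0,T;L^1\cap H^{s})$ for every $T$ and $s\ge2$ follows from the same energy argument as in Theorem \ref{thdecay}, so only the uniform bound needs proof.

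I would obtain the uniform $\dot H^{s}$ bound by induction on the regularity index, raising it in steps of size at most $\tfrac12$: choose $2=s_0<s_1<\cdots<s_N=s$ with $s_{k+1}-s_k\le\tfrac12$. The base case $k=0$ is precisely the endpoint $s=2$ of Theorem \ref{thdecay3}, which already gives $\sup_t(\|u\|_{\dot H^2}+\|v\|_{\dot H^2})<\infty$; together with Theorem \ref{thdecay} and interpolation this yields a uniform bound on $\|u\|_{\dot H^{\sigma}}+\|v\|_{\dot H^{\sigma}}$ for every $0\le\sigma\le2$. For the inductive step, suppose $\sup_t(\|u\|_{\dot H^{s_k}}+\|v\|_{\dot H^{s_k}})<\infty$---hence, again by interpolation, also for all $0\le\sigma\le s_k$---and run the energy estimate at level $s_{k+1}$. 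Using $\nabla\cdot(u\nabla\psi)=\nabla u\cdot\nabla\psi+u(u-v)$ (from $\Delta\psi=u-v$), and writing $E_\sigma=\|u\|_{\dot H^{\sigma}}^2+\|v\|_{\dot H^{\sigma}}^2$, $D_\sigma=\|u\|_{\dot H^{\sigma+\alpha/2}}^2+\|v\|_{\dot H^{\sigma+\beta/2}}^2$, one gets
$$
\tfrac12\tfrac{d}{dt}E_{s_{k+1}}+D_{s_{k+1}}=-\!\int\Lambda^{s_{k+1}}u\,\Lambda^{s_{k+1}}\!\bigl(\nabla u\cdot\nabla\psi+u(u-v)\bigr)+\!\int\Lambda^{s_{k+1}}v\,\Lambda^{s_{k+1}}\!\bigl(\nabla v\cdot\nabla\psi+v(u-v)\bigr).
$$

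The nonlinear terms are treated by: (i) the identity $\int\Lambda^{s_{k+1}}u\,\bigl(\nabla\psi\cdot\nabla\Lambda^{s_{k+1}}u\bigr)=-\tfrac12\int(u-v)(\Lambda^{s_{k+1}}u)^2$ for the leading transport piece; (ii) a Kato--Ponce commutator estimate for $[\Lambda^{s_{k+1}},\nabla\psi\cdot\nabla]u$, which recovers the derivative lost on $\nabla u$, so that $\nabla\cdot(u\nabla\psi)$ behaves like a genuine order\nobreakdash-zero quadratic nonlinearity; (iii) the fact that $\nabla^2\psi$ is a matrix of Riesz transforms of $u-v$, bounded on every $L^q$ with $1<q<\infty$; and (iv) Gagliardo--Nirenberg and Sobolev embeddings to dominate the remaining lower\nobreakdash-order factors ($\|u\|_{L^\infty}$, $\|u-v\|_{L^\infty}$, $\|\nabla^2\psi\|_{L^q}$, $\|\nabla u\|_{L^p}$, $\|u-v\|_{\dot H^{s_{k+1}-1}}$, $\ldots$) by the inductively controlled norms $\dot H^{\sigma}$, $\sigma\le s_k$, or by a small multiple of $D_{s_{k+1}}$. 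Since every monomial of the nonlinearity carries at least one factor from that inductively bounded list, this yields
$$
\tfrac{d}{dt}E_{s_{k+1}}+\tfrac12 D_{s_{k+1}}\le C\,b(t)\bigl(1+E_{s_{k+1}}\bigr),\qquad b\in L^\infty(0,\infty).
$$
Interpolating $\|u\|_{\dot H^{s_{k+1}}}\le\|u\|_{L^2}^{\theta}\|u\|_{\dot H^{s_{k+1}+\alpha/2}}^{1-\theta}$ with $\theta=\frac{\alpha/2}{s_{k+1}+\alpha/2}$ (and likewise for $v$), and using that $\|u\|_{L^2}+\|v\|_{L^2}$ is bounded, gives the coercivity inequality $D_{s_{k+1}}\ge a_0\,E_{s_{k+1}}^{1+\delta}$ for $E_{s_{k+1}}$ large, with $a_0>0$ and $\delta=\frac{\min\{\alpha,\beta\}}{2s_{k+1}}>0$. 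Hence $\tfrac{d}{dt}E_{s_{k+1}}\le-\tfrac{a_0}{2}E_{s_{k+1}}^{1+\delta}+Cb_0\bigl(1+E_{s_{k+1}}\bigr)$, whose right\nobreakdash-hand side is strictly negative once $E_{s_{k+1}}$ exceeds a fixed threshold; therefore $E_{s_{k+1}}(t)$ stays bounded by a constant depending only on $E_{s_{k+1}}(0)$, $a_0$, $\delta$ and $\sup_t b$. This closes the induction and hence the theorem.

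The main obstacle is the bookkeeping of exponents in step (iv): one must check that no monomial of the nonlinearity produces a power of $E_{s_{k+1}}$ strictly above $1+\delta$, nor a coefficient that is not bounded in time. For $d=1,2$, and for $d=3$ once $s_{k+1}>\tfrac{d}{2}+1$, this is immediate since every $L^\infty$\nobreakdash-type norm can be interpolated with a strictly subcritical weight against $\dot H^{s_k}$ (or $D_{s_{k+1}}$), which produces only sublinear contributions. In the borderline range $d=3$, $s_{k+1}\in(2,\tfrac52]$---where $H^2\not\hookrightarrow W^{1,\infty}$ and $\|\nabla u\|_{L^\infty}$ is not directly available---one must instead use a Lebesgue\nobreakdash-exponent\nobreakdash-flexible version of the Kato--Ponce estimate (for instance pairing $\|\nabla u\|_{L^6}\lesssim\|u\|_{\dot H^2}$ with $\|u-v\|_{\dot W^{\,s_{k+1}-1,3}}\lesssim\|u-v\|_{\dot H^{s_{k+1}-1/2}}$), and it is exactly in this borderline analysis---together with the base case---that the hypotheses \eqref{condition}--\eqref{condition5} of Theorem \ref{thdecay3} are invoked: in particular \eqref{condition}, which forces $\alpha,\beta>\tfrac23$ when $d=3$, guarantees that the exponents arising in these commutator and interpolation inequalities stay in the admissible range, while \eqref{condition3} controls the coupling between the $u$ and $v$ energies caused by the disparity $|\alpha-\beta|$.
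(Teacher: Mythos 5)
Your overall architecture is the standard one and your final step (uniform $\dot H^{s}$ bound plus interpolation against the $L^2$ decay of Theorem \ref{thdecay}) is exactly what the paper does. Where you genuinely diverge is in how the uniform $\dot H^{s}$ bound is obtained. The paper performs a \emph{single} energy estimate at level $s$ and controls the whole nonlinearity with the Fourier-side commutator bound of Lemma \ref{commutator}, which yields $\tfrac{d}{dt}E_s+2D_s\le C E_s\bigl(\|\hat u\|_{L^1}+\|\hat v\|_{L^1}\bigr)$; the Wiener-algebra interpolation of Lemma \ref{lemmainterpolation}, fed with the $\dot H^2$ bound of Theorem \ref{thdecay3} and the $L^2$ decay, shows that $\|\hat u\|_{L^1}+\|\hat v\|_{L^1}\to0$, so for $t\ge T^*$ the right-hand side is absorbed into the dissipation via \eqref{interpoincare} and the leftover $\|u\|_{L^2}^2+\|v\|_{L^2}^2$ is integrable in time. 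You instead bootstrap the regularity in half-steps with physical-space Kato--Ponce estimates and close each step by pitting the superlinear coercivity $D\gtrsim E^{1+\delta}$ against an (at most) $E^{1+\delta'}$, $\delta'<\delta$, nonlinearity. Both mechanisms are legitimate; the paper's route is much shorter and, because everything is measured in $L^2$ and the Wiener algebra, it entirely sidesteps the $d=3$, $s\in(2,\tfrac52]$ Lebesgue-exponent borderline that you correctly identify as the delicate part of your scheme. Your route is more elementary in its harmonic-analysis input (no Fourier commutator lemma) and more robust to perturbations of the model, at the cost of a lengthy induction.

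Two remarks on your sketch. First, your displayed inequality $\tfrac{d}{dt}E+\tfrac12 D\le Cb(t)(1+E)$ overstates what the estimates give: the monomials in which an order-$s_{k+1}$ derivative must be measured in $L^{p}$ with $p>2$ are interpolated against the dissipation and, after Young, return powers of $E$ strictly between $1$ and $1+\delta$; you acknowledge this in your ``main obstacle'' paragraph, and the ODE comparison still closes, but the inequality should be stated with those exponents. Second, the hypotheses \eqref{condition}--\eqref{condition5} enter only through the base case (the $\dot H^2$ bound of Theorem \ref{thdecay3}), not through the borderline commutator analysis as you suggest; in the paper they play no further role at level $s>2$. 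Neither point is a fatal gap, but the first is the place where your proposal still requires a genuine verification rather than a citation.
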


Finally, we provide the first order asymptotic estimate
\begin{prop}\label{thasym}Let $0<\alpha,\beta<2$, $d\in \ZZ^+$ with $d\leq3$, be fixed constants and
$$
u_0(x),v_0(x)\in L^1\cap H^s,\;s\geq2,\,s\in\RR
$$ 
be the initial data. Then, there exists a constant $C$ such that the solution $(u(x,t),v(x,t))$ to \eqref{eq1} verifies
\begin{align*}
\|u(t)-e^{-t\Lambda^{\alpha}}u_0\|_{L^2}&\leq \frac{C}{(1+t)^{\frac{d-1}{\max\{\alpha,\beta\}}-1}}\\
\|v(t)-e^{-t\Lambda^{\beta}}v_0\|_{L^2}&\leq \frac{C}{(1+t)^{\frac{d-1}{\max\{\alpha,\beta\}}-1}}
\end{align*}
\end{prop}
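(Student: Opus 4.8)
The plan is to compare $u$ with the free fractional-heat evolution $e^{-t\Lambda^{\alpha}}u_0$ by means of Duhamel's formula, feeding in the decay rates obtained in Theorems \ref{thdecay} and \ref{thdecay2}. Put $\gamma:=\max\{\alpha,\beta\}$ and set $r_u(t):=u(t)-e^{-t\Lambda^{\alpha}}u_0$, $r_v(t):=v(t)-e^{-t\Lambda^{\beta}}v_0$. Since $\pat\big(e^{-t\Lambda^{\alpha}}u_0\big)+\Lambda^{\alpha}\big(e^{-t\Lambda^{\alpha}}u_0\big)=0$, subtracting this from \eqref{eq1} gives $\pat r_u+\Lambda^{\alpha}r_u=-\nabla\cdot(u\nabla\psi)$ with $r_u(0)=0$, whence
\[
r_u(t)=-\int_0^t e^{-(t-\tau)\Lambda^{\alpha}}\nabla\cdot\big(u(\tau)\nabla\psi(\tau)\big)\,d\tau ,
\]
and similarly for $r_v$. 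I shall only discuss $r_u$; the bound for $r_v$ follows by exchanging $(\alpha,u)$ with $(\beta,v)$.

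The first step is to record two decay estimates for the nonlinear term $G:=u\nabla\psi$. Because $\Delta\psi=u-v$, the pointwise identity $\nabla\cdot(u\nabla\psi)=\nabla u\cdot\nabla\psi+u(u-v)$ holds, so no second derivative of $\psi$ is ever needed. Using this together with H\"older's inequality, the $L^p$-boundedness of the Riesz transforms, the Hardy--Littlewood--Sobolev inequality for the order-one Riesz potential $\nabla\psi=\nabla\Delta^{-1}(u-v)$ (when $d=1$ one uses instead $\|\pax\psi\|_{L^\infty}\le\tfrac12\|u-v\|_{L^1}$ and $\pax^2\psi=u-v$), the $L^p$ decay of Theorem \ref{thdecay} — in particular $\|u\|_{L^\infty}\lesssim(1+t)^{-d/\gamma}$ — and the bound $\|\nabla u\|_{L^2}\lesssim1$ from Theorem \ref{thdecay2}, I expect a routine but dimension-dependent computation to produce
\[
\|G(\tau)\|_{L^1}\lesssim(1+\tau)^{-\frac{d-1}{\gamma}},\qquad\|\nabla\cdot G(\tau)\|_{L^2}\lesssim(1+\tau)^{-\frac{d-1}{\gamma}}.
\]
(For the first, write $\|u\nabla\psi\|_{L^1}\le\|u\|_{L^{a'}}\|\nabla\psi\|_{L^{a}}$ and $\|\nabla\psi\|_{L^{a}}\lesssim\|u-v\|_{L^{q}}$ with $\tfrac1q=\tfrac1a+\tfrac1d$; the decay exponents combine to $\tfrac d\gamma(1-\tfrac1d)$. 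For the second, bound $\|\nabla u\cdot\nabla\psi\|_{L^2}\le\|\nabla u\|_{L^2}\|\nabla\psi\|_{L^\infty}$ with $\|\nabla\psi\|_{L^\infty}\lesssim\|u-v\|_{L^{a}}^{a/d}\|u-v\|_{L^\infty}^{1-a/d}\lesssim(1+\tau)^{-\frac{d-1}{\gamma}}$, while $\|u(u-v)\|_{L^2}\le\|u\|_{L^\infty}\|u-v\|_{L^2}$ decays even faster.)

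The second step is to split the Duhamel integral at $\tau=t/2$. On $[0,t/2]$ I would use Young's inequality together with the self-similar scaling $\|\nabla p^{\alpha}_{s}\|_{L^2}\lesssim s^{-\frac{d+2}{2\alpha}}$ of the kernel $p^{\alpha}_{s}$ of $e^{-s\Lambda^{\alpha}}$, so that $\big\|e^{-(t-\tau)\Lambda^{\alpha}}\nabla\cdot G(\tau)\big\|_{L^2}\lesssim(t-\tau)^{-\frac{d+2}{2\alpha}}\|G(\tau)\|_{L^1}$; since $t-\tau\ge t/2$ there, this part contributes at most
\[
t^{-\frac{d+2}{2\alpha}}\int_0^{t/2}(1+\tau)^{-\frac{d-1}{\gamma}}\,d\tau\lesssim(1+t)^{-\left(\frac{d-1}{\gamma}-1\right)},
\]
the last step being elementary: the integral grows at most like $(1+t)^{\max\{0,\,1-(d-1)/\gamma\}}\log(1+t)$, and the prefactor $t^{-(d+2)/(2\alpha)}$ absorbs the logarithm and any residual growth because $d\le3$. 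On $[t/2,t]$, where the semigroup no longer smooths effectively, I would keep the derivative on the nonlinearity and use the $L^2$-contractivity of $e^{-s\Lambda^{\alpha}}$, namely $\big\|e^{-(t-\tau)\Lambda^{\alpha}}\nabla\cdot G(\tau)\big\|_{L^2}\le\|\nabla\cdot G(\tau)\|_{L^2}\lesssim(1+\tau)^{-\frac{d-1}{\gamma}}\approx(1+t)^{-\frac{d-1}{\gamma}}$; integrating over an interval of length $t/2$ this contributes $\lesssim t\,(1+t)^{-\frac{d-1}{\gamma}}\lesssim(1+t)^{-(\frac{d-1}{\gamma}-1)}$. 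Adding the two halves gives $\|r_u(t)\|_{L^2}\lesssim(1+t)^{-(\frac{d-1}{\gamma}-1)}$, and likewise for $r_v$.

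The main obstacle, I expect, is the interaction between the divergence in the nonlinearity and the weak regularisation of $\Lambda^{\alpha}$ for small $\alpha$: near $\tau=t$ the semigroup cannot be allowed to carry the gradient, so the derivative must stay on $u\nabla\psi$, which is precisely why one must control $\|\nabla\cdot(u\nabla\psi)\|_{L^2}$ and not merely $\|u\nabla\psi\|_{L^1}$, and hence why the extra regularity $u_0,v_0\in H^s$ with $s\ge2$ is used. The remaining, more technical point is the bookkeeping needed to show that both $\|u\nabla\psi\|_{L^1}$ and $\|\nabla\cdot(u\nabla\psi)\|_{L^2}$ decay at exactly the rate $(1+t)^{-(d-1)/\gamma}$: the Riesz-potential estimates for $\nabla\psi=\nabla\Delta^{-1}(u-v)$ behave differently for $d=1,2,3$ — in particular the Hardy--Littlewood--Sobolev step is available only when $d\ge2$ — so that part of the argument has to be organised dimension by dimension.
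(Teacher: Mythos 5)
Your argument is correct and follows essentially the same route as the paper: Duhamel's formula, a split of the time integral at $t/2$, the $L^1\to L^2$ smoothing of $e^{-s\Lambda^{\alpha}}$ on the far half, an $L^2$ bound on the nonlinearity on the near half, and the key decay $\|\nabla\psi\|_{L^\infty}\lesssim (1+t)^{-\frac{d-1}{\max\{\alpha,\beta\}}}$, which the paper obtains by splitting the Riesz-potential integral into near and far fields rather than by your interpolation, with the same rate. The only real variation is that on $[0,t/2]$ you place the gradient on the kernel, using $\|\nabla p^{\alpha}_{t-\tau}\|_{L^2}\lesssim (t-\tau)^{-\frac{d+2}{2\alpha}}$ together with only $\|u\nabla\psi\|_{L^1}$, whereas the paper applies plain hypercontractivity to $\nabla\cdot(u\nabla\psi)$ and estimates its $L^1$ norm; both give the stated rate.
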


\section{Proof of Theorem \ref{thdecay}: Global existence and $L^p$ decay estimates}\label{sec2}
\textbf{Step 1: Local existence} The local existence and uniqueness follows from standard methods (see for instance \cite{AGM}).

\textbf{Step 2: Boundedness in $L^p$} First notice that, given $u_0(x) \geq 0$ and $v_0(x) \geq 0$, we have that $ u(t) \geq 0$ and $v(x,t) \geq 0 \ \forall t\geq0$ (this can be shown with a contradiction argument and the use of pointwise methods \cite{BG}). Thus, we have
$$
\frac{d}{dt} \mathcal{F}_1[u,v]=0.
$$
Furthermore, we have the stronger equalities
$$
\|u(t)\|_{L^1}=\|u_0\|_{L^1},\;\|v(t)\|_{L^1}=\|v_0\|_{L^1}.
$$

Consider now the case $1<p<\infty$. Then
\begin{align*}
\frac{d}{dt} \mathcal{F}_p[u,v]&=\frac{d}{dt} \|u(t)\|^p_{L^p}+\frac{d}{dt}\|v(t)\|^p_{L^p} \\
&= \int_{\RR^d} p u(y,t)^{p-1} \pat u(y,t) dy + \int_{\RR^d} p v(y,t)^{p-1} \pat v(y,t) dy \\
&=\int_{\RR^d} p u(y,t)^{p-1} [-\Lambda^\alpha u(y,t)-\nabla \cdot (u(y,t) \nabla \psi)] dy \\ 
		& \quad + \int_{\RR^d} p v(y,t) ^{p-1} [-\Lambda^\beta v(y,t)+ \nabla \cdot (v(y,t)\nabla \psi)] dy \\
\end{align*}
The transport terms are
$$
T_1=-\int_{\RR^d} p u^{p-1} \nabla \cdot (u \nabla \psi) =\int_{\RR^d} p(p-1) u^{p-1} \nabla u \cdot \nabla \psi=-\int_{\RR^d} (p-1) u^{p} \Delta \psi, 
$$
$$
T_2=\int_{\RR^d} p v^{p-1} \nabla \cdot (v \nabla \psi) =-\int_{\RR^d} p(p-1) v^{p-1} \nabla v \cdot \nabla \psi=\int_{\RR^d} (p-1) v^{p} \Delta \psi.
$$
Symmetrizing the diffusive terms, we get
\begin{align*}
D_1&=-\int_{\RR^d} u(y,t)^{p-1} \Lambda^\alpha u(y,t) dy\\ &= -p \int_{\RR^d} u(y,t)^{p-1} \int_{\RR^d} \frac{u(y,t)-u(\eta,t)}{|y-\eta|^{d+\alpha}} d\eta dy \\
&=  -p \int_{\RR^d} u(\eta,t)^{p-1} \int_{\RR^d} \frac{u(\eta,t)-u(y,t)}{|\eta-y|^{d+\alpha}} d\eta dy \\
&=  p \int_{\RR^d} u(\eta,t)^{p-1} \int_{\RR^d} \frac{u(y,t)-u(\eta,t)}{|\eta-y|^{d+\alpha}} d\eta dy \\
&= -\frac{p}{2} \int_{\RR^d} \int_{\RR^d} \left(u(y,t)^{p-1}-u(\eta,t)^{p-1}\right) \frac{u(y,t)-u(\eta,t)}{|y-\eta|^{d+\alpha}} d\eta dy\\
& \leq 0  
\end{align*}
Following a similar procedure, 
\begin{align*}
D_2&=-\int_{\RR^d} v(y,t)^{p-1} \Lambda^\beta v(y,t) dy \\
&=-\frac{p}{2} \int_{\RR^d} \int_{\RR^d} \left(v(y,t)^{p-1}-v(\eta,t)^{p-1}\right) \frac{v(y,t)-v(\eta,t)}{|y-\eta|^{d+\beta}} d\eta dy\\
&\leq 0.
\end{align*} 
Thus, 
$$
\frac{d}{dt} \mathcal{F}_p[u,v]\leq T_1+T_2 =-(p-1)\int_{\RR^d}  (u^{p}-v^{p})(u-v)dx\leq 0,
$$ 
and we conclude
$$
\mathcal{F}_p[u,v]\leq \mathcal{F}_p[u_0,v_0].
$$

\textbf{Step 3: Boundedness in $L^\infty$} Due to the smoothness of $u(x,t)$ and $v(x,t)$ in space and time we have that 
$$
M_u(t):=\|u(t)\|_{L^\infty}=u(x_t,t),\;M_v(t):=\|v(t)\|_{L^\infty}=v(y_t,t)
$$ are Lipschitz. Thus, using Rademacher Theorem $M_u(t)$ and $M_v(t)$ are differentiable almost everywhere and (see \cite{AGM, cor2})
\begin{align*}
\frac{d}{dt} M_u(t) & =\pat u(x_t) \\
\frac{d}{dt} M_v(t)&=\pat v(y_t).
\end{align*}

Now we show the $\mathcal{F}_\infty[u,v]=\|u(t)\|_{L^\infty}+ \|v(t)\|_{L^\infty}$ is a Lyapunov functional:
\begin{align*}
\frac{d}{dt} \mathcal{F}_\infty[u,v] &= M_u(t) + M_v(t) \\
&= \pat u(x_t) + \pat v(y_t) \\
&= -\Lambda^\alpha u(x_t) - \Lambda^\beta v(y_t)-u(x_t)\Delta \psi(x_t)+ v(y_t)\Delta \psi(y_t) \\
&= -\Lambda^\alpha u(x_t) - \Lambda^\beta v(y_t)-u(x_t) [u(x_t)-v(x_t)]+ v(y_t)[u(y_t)-v(y_t)] \\
&\leq -\Lambda^\alpha u(x_t) - \Lambda^\beta v(y_t) -u(x_t)^2 +2u(x_t)v(y_t)-v(y_t)^2.
\end{align*}
Thus, using \eqref{eq:8}, we have that
\begin{align*}
-\Lambda^\alpha u(x_t) &= -\int_{\RR^d} \frac{u(x_t)-u(x_t-\eta)}{|\eta|^{d+\alpha}}d\eta \leq 0, \\
-\Lambda^\beta v(y_t) &= -\int_{\RR^d} \frac{v(y_t)-v(y_t-\eta)}{|\eta|^{d+\beta}}d\eta \leq 0,
\end{align*}
and
\begin{equation}\label{Lyapunofinfty}
\frac{d}{dt} \mathcal{F}_\infty[u,v] \leq -\Lambda^\alpha u(x_t) - \Lambda^\beta v(y_t)-(u(x_t)-v(y_t))^2 \leq0.
\end{equation}
So
$$
\mathcal{F}_\infty[u,v]\leq \mathcal{F}_\infty[u_0,v_0].
$$
\textbf{Step 4: Decay in $L^\infty$} Furthermore, we have the following lower bounds (see Lemma \ref{lemaaux3} and \cite{bae2015global})
\begin{align*}
\Lambda^\alpha u(x_t) &\geq \frac{c_{\alpha,d}u(x_t)\frac{\|u_0\|_{L^1}}{u(x_t)}}{\left(\left(\frac{2\|u_0\|_{L^1}}{u(x_t)}\right)^{1/d}\left(\frac{2}{\omega_d}\right)^{1/d}\right)^{d+\alpha}}\geq c(d,\alpha)\frac{u(x_t)^{1+\alpha/d}}{\|u_0\|^{\alpha/d}_{L^1}}\\
\Lambda^\beta v(y_t) &\geq \frac{c_{\beta,d}v(y_t)\frac{\|v_0\|_{L^1}}{v(y_t)}}{\left(\left(\frac{2\|v_0\|_{L^1}}{v(y_t)}\right)^{1/d}\left(\frac{2}{\omega_d}\right)^{1/d}\right)^{d+\beta}}\geq c(d,\beta)\frac{v(y_t)^{1+\beta/d}}{\|v_0\|^{\beta/d}_{L^1}},
\end{align*}
Thus, \eqref{Lyapunofinfty} can be sharpened and we get
\begin{equation}\label{Decayinfty1}
\frac{d}{dt} \mathcal{F}_\infty[u,v] \leq -c(d,\alpha)\frac{u(x_t)^{1+\alpha/d}}{\|u_0\|^{\alpha/d}_{L^1}}-c(d,\beta)\frac{v(y_t)^{1+\beta/d}}{\|v_0\|^{\beta/d}_{L^1}}.
\end{equation}

Fix $\gamma>0$. Then 
\begin{align*}
\left(u(x_t)+v(y_t)\right)^{1+\gamma}&\leq 2^{1+\gamma}\max\{u(x_t),v(y_t)\}^{1+\gamma}
\\
&\leq 2^{1+\gamma}\left(\max\{u(x_t),v(y_t)\}^{1+\gamma}+\min\{u(x_t),v(y_t)\}^
{1-\gamma+(\alpha+\beta)/d}\right).
\end{align*}
We define $\gamma$ as
\begin{equation}\label{gamma}
\gamma:=\left\{\begin{array}{ll}
\alpha/d\text{ if }\max\{u(x_t),v(y_t)\}=u(x_t)\\
\beta/d\text{ if }\max\{u(x_t),v(y_t)\}=v(y_t)
\end{array}\right..
\end{equation}
With this definition of $\gamma$, we have
\begin{align*}
\left(u(x_t)+v(y_t)\right)^{1+\gamma}&\leq 2^{1+\gamma}\max\{u(x_t),v(y_t)\}^{1+\gamma}
\\
&\leq 2^{1+\max\{\alpha,\beta\}/d}\left(u(x_t)^{1+\alpha/d}+v(y_t)^
{1+\beta/d}\right).
\end{align*}
Let us denote
\begin{equation}\label{Constant1}
C_{min}(\alpha,\beta,d,u_0,v_0):=\min\left\{\frac{c(d,\alpha)}{\|u_0\|^{\alpha/d}_{L^1}},\frac{c(d,\beta)}{\|v_0\|^{\beta/d}_{L^1}}\right\},
\end{equation}
then
$$
\frac{C_{min}}{2^{1+\max\{\alpha,\beta\}/d}}\left(u(x_t)+v(y_t)\right)^{1+\gamma}\leq c(d,\alpha)\frac{u(x_t)^{1+\alpha/d}}{\|u_0\|^{\alpha/d}_{L^1}}+c(d,\beta)\frac{v(y_t)^{1+\beta/d}}{\|v_0\|^{\beta/d}_{L^1}}.
$$
We obtain the inequality
$$
\frac{d}{dt}\mathcal{F}_\infty[u,v]\leq -\frac{C_{min}}{2^{1+\max\{\alpha,\beta\}/d}}\mathcal{F}_\infty[u,v]^{1+\gamma},
$$
where $\gamma$ is given by \eqref{gamma}. We obtain the following rate of decay,
$$
\mathcal{F}_\infty[u,v]\leq \frac{\mathcal{F}_\infty[u_0,v_0]}{\left(1+Kt\right)^{1/\gamma}}\leq \frac{\mathcal{F}_\infty[u_0,v_0]}{\left(1+Kt\right)^{d/\max\{\alpha,\beta\}}},
$$
where
$$
K=\min\left\{(\mathcal{F}_\infty[u_0,v_0])^{\alpha/d},(\mathcal{F}_\infty[u_0,v_0])^{\alpha/d}\right\}\frac{\min\{\alpha,\beta\}}{d}\frac{C_{min}}{2^{1+\max\{\alpha,\beta\}/d}}.
$$
As a consequence, we have
$$
\|u(t)\|_{L^\infty},\|v(t)\|_{L^\infty}\leq \frac{C_\infty}{\left(1+t\right)^{\frac{d}{\max\{\alpha,\beta\}}}}.
$$
\textbf{Step 5: Decay in $L^p$} Using interpolation and the conservation of mass, we obtain
$$
\|u(t)\|_{L^p}\leq \|u_0\|_{L^1}^{\frac{1}{p}}\frac{C_\infty^{1-\frac{1}{p}}}{\left(1+t\right)^{\frac{d}{\max\{\alpha,\beta\}}(1-\frac{1}{p})}},
$$
$$
\|v(t)\|_{L^p}\leq \|v_0\|_{L^1}^{\frac{1}{p}}\frac{C_\infty^{1-\frac{1}{p}}}{\left(1+t\right)^{\frac{d}{\max\{\alpha,\beta\}}(1-\frac{1}{p})}},
$$
$$
\mathcal{F}_p[u,v]\leq \left(\|u_0\|_{L^1}+\|v_0\|_{L^1}\right)\frac{C_\infty^{p-1}}{\left(1+t\right)^{\frac{d}{\max\{\alpha,\beta\}}(p-1)}}.
$$

\textbf{Step 6: Global existence} The global existence follows from the decay of $\|u\|_{L^\infty}+\|v\|_{L^\infty}$, energy estimates and a standard continuation argument (see \cite{AGM}).

\section{Proof of Theorem \ref{thdecay2}: Decay estimates in Sobolev spaces $H^s,\, 0<s<1$}\label{sec3}
\textbf{Step 1: Boundedness in $H^1$ ($d=1$)} 

First we deal with the one-dimensional case. We compute
\begin{align*}
\frac{1}{2}\frac{d}{dt}\|u\|_{\dot{H}^{1}}^2&=-\|u\|_{\dot{H}^{1+\alpha/2}}^2-\int_{\RR}\pax u \pax^2(u\pax\psi)\\
&\leq -\|u\|_{\dot{H}^{1+\alpha/2}}^2-\int_{\RR}\pax u (\pax^2 u\pax\psi+ u\pax(u-v)+2\pax u(u-v)),
\end{align*}
and
\begin{align*}
\frac{1}{2}\frac{d}{dt}\|v\|_{\dot{H}^{1}}^2&=-\|v\|_{\dot{H}^{1+\beta/2}}^2+\int_{\RR}\pax v \pax^2(v\pax\psi)\\
&\leq -\|v\|_{\dot{H}^{1+\beta/2}}^2+\int_{\RR}\pax v (\pax^2 v\pax\psi+ v\pax(u-v)+2\pax v(u-v)).
\end{align*}
Adding them together and using H\"{o}lder inequality, we have
\begin{align*}
\frac{d}{dt}\left(\|u\|_{\dot{H}^{1}}^2+\|v\|_{\dot{H}^{1}}^2\right)&=-2\|u\|_{\dot{H}^{1+\alpha/2}}^2-2\|v\|_{\dot{H}^{1+\beta/2}}^2\\
&\quad+C(\|\pax u\|_{L^2}^2+\|\pax v\|_{L^2}^2)(\|u\|_{L^\infty}+\|v\|_{L^\infty}).
\end{align*}
Using the interpolation inequality
\begin{equation}\label{interpoincare}
\|\Lambda^r f\|_{L^2}^2\leq \|f\|_{L^2}^2+\|f\|_{\dot{H}^{r+s}}^2,\;\forall\,r,s\geq0,
\end{equation}
we conclude that, for $t\geq T^*$ and $T^*<\infty$ large enough (see Theorem \ref{thdecay}), 
\begin{align*}
\frac{d}{dt}\left(\|u\|_{\dot{H}^{1}}^2+\|v\|_{\dot{H}^{1}}^2\right)&=-\|u\|_{\dot{H}^{1+\alpha/2}}^2-\|v\|_{\dot{H}^{1+\beta/2}}^2\\
&\quad+C(\|u\|_{L^2}^2+\|v\|_{L^2}^2)(\|u\|_{L^\infty}+\|v\|_{L^\infty}).
\end{align*}
Recalling that
$$
1<\frac{2}{\max\{\alpha,\beta\}}
$$ 
and using Theorem \ref{thdecay} to obtain that
$$
(\|u\|_{L^2}^2+\|v\|_{L^2}^2)(\|u\|_{L^\infty}+\|v\|_{L^\infty})\leq \frac{C}{(1+t)^{\frac{2}{\max\{\alpha,\beta\}}}},
$$
so
$$
\int_{T^*}^t(\|u\|_{L^2}^2+\|v\|_{L^2}^2)(\|u\|_{L^\infty}+\|v\|_{L^\infty})ds\leq C,
$$
we have that
$$
\|u(t)\|_{\dot{H}^1}^2+\|v(t)\|_{\dot{H}^1}^2+\int_{T^*}^t\|u\|_{\dot{H}^{1+\alpha/2}}^2+\|v\|_{\dot{H}^{1+\beta/2}}^2ds\leq C,\,\forall\,t\geq T^*.
$$
Standard energy estimates on the finite interval $[0,T^*]$ leads to
$$
\|u(t)\|_{\dot{H}^1}^2+\|v(t)\|_{\dot{H}^1}^2\leq C,\,\forall\,t\geq 0.
$$

\textbf{Step 2: Boundedness in $H^1$ ($d=2$, $d=3$)}
 
Assume now that $d=2$ or $d=3$. Testing the equation for $u$ against $\Lambda^{2} u$, we have
\begin{align*}
\frac{1}{2}\frac{d}{dt}\|u\|_{\dot{H}^{1}}^2&=-\|u\|_{\dot{H}^{1+\alpha/2}}^2-\int_{\RR^d}\Lambda u \Lambda(\nabla u \cdot \nabla \psi) dx-\int_{\RR^d}\Lambda^{2} u u (u-v) dx\\
&=-\|u\|_{\dot{H}^{1+\alpha/2}}^2-\int_{\RR^d}\Lambda u [\Lambda,\nabla \psi]\cdot\nabla u dx-\int_{\RR^d}\Lambda u \nabla \psi\cdot\nabla \Lambda u dx\\
&\quad-\int_{\RR^d}\Lambda u \Lambda(u(u-v))dx\\
&=-\|u\|_{\dot{H}^{1+\alpha/2}}^2-\int_{\RR^d}\Lambda u [\Lambda,\nabla \psi]\cdot\nabla u dx+\frac{1}{2}\int_{\RR^d}|\Lambda u|^2 (u-v) dx\\
&\quad-\int_{\RR^d}\Lambda u \Lambda(u(u-v)) dx.
\end{align*}

In the same way
\begin{align*}
\frac{1}{2}\frac{d}{dt}\|v\|_{\dot{H}^{1}}^2&=-\|v\|_{\dot{H}^{1+\beta/2}}^2+\int_{\RR^d}\Lambda v \Lambda (\nabla v \cdot \nabla \psi) dx+\int_{\RR^d}\Lambda^{2} v v (u-v) dx\\
&=-\|v\|_{\dot{H}^{1+\beta/2}}^2+\int_{\RR^d}\Lambda v [\Lambda,\nabla \psi]\cdot\nabla v dx+\int_{\RR^d}\Lambda v \nabla \psi\cdot\nabla \Lambda v dx\\
&\quad+\int_{\RR^d}\Lambda v \Lambda(v(u-v))dx\\
&=-\|v\|_{\dot{H}^{1+\beta/2}}^2+\int_{\RR^d}\Lambda v [\Lambda,\nabla \psi]\cdot\nabla v dx-\frac{1}{2}\int_{\RR^d}|\Lambda v|^2 (u-v) dx\\
&\quad+\int_{\RR^d}\Lambda v \Lambda(v(u-v)) dx.
\end{align*}

Recalling the Sobolev embedding
\begin{equation}\label{sobolevembed}
\|f\|_{L^{\frac{2d}{d-s}}}\leq C\|\Lambda^{s/2} f\|_{L^2},
\end{equation}
and Theorem \ref{thdecay} (using $d\geq2$, $\max\{\alpha,\beta\}<2$), we have a time $T^*<\infty$ such that, for $t\geq T^*$,
\begin{align}
\left|\int_{\RR^d}|\Lambda u|^2 (u-v) dx\right|&\leq \|\Lambda u\|^2_{L^{\frac{2d}{d-\alpha}}}\|u-v\|_{L^{d/\alpha}}\nonumber\\
&\leq C\|u\|^2_{\dot{H}^{1+\frac{\alpha}{2}}}\|u-v\|_{L^{d/\alpha}}\nonumber\\
&\leq \frac{1}{8}\|u\|^2_{\dot{H}^{1+\frac{\alpha}{2}}}\label{term4},\\
\left|\int_{\RR^d}|\Lambda v|^2 (u-v) dx\right|&\leq \|\Lambda v\|^2_{L^{\frac{2d}{d-\beta}}}\|u-v\|_{L^{d/\beta}}\nonumber\\
&\leq C\|v\|^2_{\dot{H}^{1+\frac{\beta}{2}}}\|u-v\|_{L^{d/\beta}}\nonumber\\
&\leq \frac{1}{8}\|v\|^2_{\dot{H}^{1+\frac{\beta}{2}}}\label{term3}.
\end{align}
Using the fractional Leibniz rule
$$
\|\Lambda^s(fg)\|_{L^p}\leq C(\|\Lambda^s f\|_{L^{p_1}}\|g\|_{L^{p_2}}+\|\Lambda^s g\|_{L^{p_3}}\|f\|_{L^{p_4}}),
$$
with
$$
\frac{1}{p}=\frac{1}{p_1}+\frac{1}{p_2}=\frac{1}{p_3}+\frac{1}{p_4},
$$
we have
\begin{multline}
\left|\int_{\RR^d}\Lambda u \Lambda(u(u-v)) dx\right|\\\leq C\|\Lambda u\|_{L^{\frac{2d}{d-\alpha}}}(\|\Lambda u\|_{L^2}\|u-v\|_{L^\frac{2d}{\alpha}}+\|u\|_{L^\frac{2d}{\alpha}}\|\Lambda(u-v)\|_{L^2})\\\leq C\|u\|_{\dot{H}^{1+\alpha/2}}(\|\Lambda u\|_{L^2}\|u-v\|_{L^\frac{2d}{\alpha}}+\|u\|_{L^\frac{2d}{\alpha}}\|\Lambda(u-v)\|_{L^2}),\label{term1}
\end{multline}
\begin{multline}
\left|\int_{\RR^d}\Lambda v \Lambda(v(u-v)) dx\right|\\\leq C\|\Lambda v\|_{L^{\frac{2d}{d-\beta}}}(\|\Lambda v\|_{L^2}\|u-v\|_{L^\frac{2d}{\beta}}+\|v\|_{L^\frac{2d}{\beta}}\|\Lambda(u-v)\|_{L^2})\\\leq C\|v\|_{\dot{H}^{1+\beta/2}}(\|\Lambda v\|_{L^2}\|u-v\|_{L^\frac{2d}{\beta}}+\|v\|_{L^\frac{2d}{\beta}}\|\Lambda(u-v)\|_{L^2}).\label{term2}
\end{multline}
Recalling the inequalities
\begin{equation}\label{riesz1}
\|\partial_{x_i}\partial_{x_j} f\|_{L^p}\leq C\|\Delta f\|_{L^p},\;\forall\, 1<p<\infty,
\end{equation}
\begin{equation}\label{riesz2}
\|\partial_{x_i}f\|_{L^p}\leq C\|\Lambda f\|_{L^p},\;\forall\, 1<p<\infty,
\end{equation}
and Lemma \ref{commutator2}, we have
\begin{equation}\label{comm1}
\|[\Lambda,\nabla\psi]\nabla u\|_{L^{\frac{2d}{d+\alpha}}}
\leq C\|\Delta\psi\|_{L^{\frac{2d}{\alpha}}}\|\Lambda u\|_{L^{2}},
\end{equation}
\begin{equation}\label{comm21}
\|[\Lambda,\nabla\psi]\nabla v\|_{L^{\frac{2d}{d+\beta}}}
\leq C\|\Delta \psi\|_{L^{\frac{2d}{\beta}}}\|\Lambda v\|_{L^{2}}.
\end{equation}
Thus, due to \eqref{comm1} and \eqref{comm21}, we have that
\begin{align}
\left|\int_{\RR^d}\Lambda u [\Lambda,\nabla \psi]\cdot\nabla u dx\right|&\leq C\|\Lambda u\|_{L^{\frac{2d}{d-\alpha}}}\|u-v\|_{L^{\frac{2d}{\alpha}}}\|\Lambda u\|_{L^{2}}\nonumber\\
&\leq C\|u\|_{\dot{H}^{1+\alpha/2}}\|u-v\|_{L^{\frac{2d}{\alpha}}}\|\Lambda u\|_{L^{2}} \label{term5},\\
\left|\int_{\RR^d}\Lambda u [\Lambda,\nabla \psi]\cdot\nabla v dx\right|&\leq C\|\Lambda v\|_{L^{\frac{2d}{d-\beta}}}\|u-v\|_{L^{\frac{2d}{\beta}}}\|\Lambda v\|_{L^{2}}\nonumber\\
&\leq C\|v\|_{\dot{H}^{1+\beta/2}}\|u-v\|_{L^{\frac{2d}{\beta}}}\|\Lambda v\|_{L^{2}} \label{term6}.
\end{align}

Collecting the terms \eqref{term4}, \eqref{term3}, \eqref{term1}, \eqref{term2}, \eqref{term5} and \eqref{term6} and using Young's inequality, we have that

\begin{align*}
\frac{d}{dt}\left(\|u\|_{\dot{H}^1}^2+\|v\|_{\dot{H}^1}^2\right)&\leq -\|u\|_{\dot{H}^{1+\alpha/2}}^2-\|v\|_{\dot{H}^{1+\beta/2}}^2+C\bigg{(}\|\Lambda u\|_{L^2}^2\|u-v\|_{L^\frac{2d}{\alpha}}^2\\
&\quad+\|u\|_{L^\frac{2d}{\alpha}}^2\|\Lambda(u-v)\|_{L^2}^2+\|\Lambda v\|_{L^2}^2\|u-v\|_{L^\frac{2d}{\beta}}^2\\
&\quad+\|v\|_{L^\frac{2d}{\beta}}^2\|\Lambda(u-v)\|_{L^2}^2\bigg{)}.
\end{align*}

Using the interpolation inequality \eqref{interpoincare}, we conclude that, for $t\geq T^*$ and $T^*<\infty$ large enough  (Theorem \ref{thdecay}), 
\begin{align*}
\frac{d}{dt}\left(\|u\|_{\dot{H}^1}^2+\|v\|_{\dot{H}^1}^2\right)&\leq C\bigg{(}\|u\|_{L^2}^2\|u-v\|_{L^\frac{2d}{\alpha}}^2+\|u\|_{L^\frac{2d}{\alpha}}^2\|u-v\|_{L^2}^2+\|v\|_{L^2}^2\|u-v\|_{L^\frac{2d}{\beta}}^2\\
&\quad+\|v\|_{L^\frac{2d}{\beta}}^2\|u-v\|_{L^2}^2\bigg{)}-\frac{1}{2}\left(\|u\|_{\dot{H}^{1+\alpha/2}}^2+\|v\|_{\dot{H}^{1+\beta/2}}^2\right).
\end{align*}
Another application of Theorem \ref{thdecay} leads to
\begin{multline*}
\frac{d}{dt}\left(\|u\|_{\dot{H}^1}^2+\|v\|_{\dot{H}^1}^2\right)+\frac{1}{2}\left(\|u\|_{\dot{H}^{1+\alpha/2}}^2+\|v\|_{\dot{H}^{1+\beta/2}}^2\right)\\
\leq C\left(\|v\|_{L^2}^2+\|u\|_{L^2}^2\right)\left(\|v\|_{L^\frac{2d}{\alpha}}^2+\|u\|_{L^\frac{2d}{\alpha}}^2+\|u\|_{L^\frac{2d}{\beta}}^2+\|v\|_{L^\frac{2d}{\beta}}^2\right)\\
\leq \frac{C}{(1+t)^{\frac{d}{\max\{\alpha,\beta\}}}}\left(\frac{1}{\left(1+t\right)^{\frac{2d(1-\frac{\alpha}{2d})}{\max\{\alpha,\beta\}}}}+\frac{1}{\left(1+t\right)^{\frac{2d(1-\frac{\beta}{2d})}{\max\{\alpha,\beta\}}}}\right)\\
\leq C\left(\frac{1}{\left(1+t\right)^{\frac{3d-\max\{\alpha,\beta\}}{\max\{\alpha,\beta\}}}}+\frac{1}{\left(1+t\right)^{\frac{3d-\beta}{\max\{\alpha,\beta\}}}}\right)\\
\leq \frac{C}{\left(1+t\right)^{\frac{3d-\max\{\alpha,\beta\}}{\max\{\alpha,\beta\}}}}.
\end{multline*}
Using Theorem \ref{thdecay} with $\max\{\alpha,\beta\}<2$ we obtain the inequality
\begin{equation}\label{exponent}
1<\frac{2}{\max\{\alpha,\beta\}}\leq \frac{d}{\max\{\alpha,\beta\}},
\end{equation}
thus, we have that
\begin{equation}\label{exponent2}
2< \frac{3d}{\max\{\alpha,\beta\}}-1.
\end{equation}
Integrating in time, we obtain

$$
\|u(t)\|_{\dot{H}^1}^2+\|v(t)\|_{\dot{H}^1}^2+\frac{1}{2}\int_{T^*}^t\|u\|_{\dot{H}^{1+\alpha/2}}^2+\|v\|_{\dot{H}^{1+\beta/2}}^2ds\leq C,\,\forall\,t\geq T^*,
$$

Taking then the maximum of the norms on the finite interval $[0,T^*]$, we obtain
\begin{equation}\label{boundH1}
\|u(t)\|_{\dot{H}^1}^2+\|v(t)\|_{\dot{H}^1}^2\leq C,\,\forall\,t\geq 0,
\end{equation}

\textbf{Step 3: Decay in $H^{s}$} 

Sobolev interpolation
\begin{equation}\label{interpolationsob}
\|f\|_{\dot{H}^s}\leq C\|f\|_{L^2}^{\frac{r-s}{r}}\|f\|_{\dot{H}^{r}}^{\frac{s}{r}},
\end{equation}
(with $r=1$) gives us the following decay in the intermediate spaces $\dot{H}^{s}$ for every $0\leq s<1$
\begin{equation}\label{boundH1gamma}
\|u(t)\|_{\dot{H}^{s}}+\|v(t)\|_{\dot{H}^{s}}\leq \frac{C}{(1+t)^{\frac{d}{\max\{\alpha,\beta\}}\frac{1-s}{2}}},\,\forall\,t\geq 0,
\end{equation}

\section{Proof of Theorem \ref{thdecay3}: Decay estimates in Sobolev spaces $H^s,\, 0\leq s<2$}\label{sec4}
 
\textbf{Step 1: Boundedness in $H^2$} Testing against $(-\Delta)^2 u$, we have
\begin{align*}
\frac{1}{2}\frac{d}{dt}\|u\|_{\dot{H}^{2}}^2&=-\|u\|_{\dot{H}^{2+\alpha/2}}^2-\int_{\RR^d}\Delta u \Delta(\nabla u \cdot \nabla \psi) dx-\int_{\RR^d}\Delta u \Delta(u (u-v)) dx\\
&=-\|u\|_{\dot{H}^{2+\alpha/2}}^2-\int_{\RR^d}\Delta u [\Delta,\nabla \psi]\cdot\nabla u dx-\int_{\RR^d}\Delta u \nabla \psi\cdot\nabla \Delta u dx\\
&\quad-\int_{\RR^d}\Delta u \Delta(u(u-v)) dx\\
&=-\|u\|_{\dot{H}^{2+\alpha/2}}^2-\int_{\RR^d}\Delta u [\Delta,\nabla \psi]\cdot\nabla u dx+\frac{1}{2}\int_{\RR^d}|\Delta u|^2 (u-v) dx\\
&\quad-\int_{\RR^d}\Delta u \left(\Delta u(u-v) + u\Delta(u-v)+2\nabla u \cdot \nabla (u-v)\right) dx\\
&= -\|u\|_{\dot{H}^{2+\alpha/2}}^2-\int_{\RR^d}\Delta u [\Delta,\nabla \psi]\cdot\nabla u dx-\frac{1}{2}\int_{\RR^d}|\Delta u|^2 (u-v) dx\\
&\quad-\int_{\RR^d}\Delta u \left( u\Delta(u-v)+2\nabla u \cdot \nabla (u-v)\right) dx\\
&\leq -\|u\|_{\dot{H}^{2+\alpha/2}}^2-\int_{\RR^d}\Delta u [\Delta,\nabla \psi]\cdot\nabla u dx+\frac{1}{2}\int_{\RR^d}|\Delta u|^2 v dx\\
&\quad+\int_{\RR^d}\Delta u \left( u\Delta v+2\nabla u \cdot \nabla (u-v)\right) dx,
\end{align*}
In the same way
\begin{align*}
\frac{1}{2}\frac{d}{dt}\|v\|_{\dot{H}^{2}}^2&=-\|v\|_{\dot{H}^{2+\beta/2}}^2+\int_{\RR^d}\Delta v \Delta(\nabla v \cdot \nabla \psi) dx+\int_{\RR^d}\Delta v \Delta(v (u-v)) dx\\
&\leq -\|v\|_{\dot{H}^{2+\beta/2}}^2+\int_{\RR^d}\Delta v [\Delta,\nabla \psi]\cdot\nabla v dx+\frac{1}{2}\int_{\RR^d}|\Delta v|^2 u dx\\
&\quad+\int_{\RR^d}\Delta v \left(v\Delta u+2\nabla v\cdot \nabla(u-v) \right)dx.
\end{align*}
We collect this estimates and use H\"{o}lder inequality to obtain

\begin{align*}
\frac{1}{2}\frac{d}{dt}\left(\|u\|_{\dot{H}^{2}}^2+\|v\|_{\dot{H}^{2}}^2\right)&\leq -\|u\|_{\dot{H}^{2+\alpha/2}}^2+\|\Delta u\|_{L^{\frac{2d}{d-\alpha}}}\|[\Delta,\nabla \psi]\cdot\nabla u\|_{L^{\frac{2d}{d+\alpha}}}\\
&\quad+\frac{1}{2}\|\Delta u\|_{L^{\frac{2d}{d-\alpha}}}^2\|v\|_{L^{\frac{d}{\alpha}}}+\frac{1}{2}\|\Delta v\|_{L^{\frac{2d}{d-\beta}}}^2\|u\|_{L^{\frac{d}{\beta}}}\\
&\quad+2\|\Delta u\|_{L^{\frac{2d}{d-\alpha}}}\|\nabla u\|_{L^{\frac{4d}{d+\alpha}}}\|\nabla (u-v)\|_{L^{\frac{4d}{d+\alpha}}}\\
&\quad +2\|\Delta v\|_{L^{\frac{2d}{d-\beta}}}\|\nabla v\|_{L^{\frac{4d}{d+\beta}}}\|\nabla (u-v)\|_{L^{\frac{4d}{d+\beta}}}\\
&\quad-\|v\|_{\dot{H}^{2+\beta/2}}^2+\|\Delta v\|_{L^{\frac{2d}{d-\beta}}} \|[\Delta,\nabla \psi]\cdot\nabla v \|_{L^{\frac{2d}{d+\beta}}}\\
&\quad+\|\Delta u\|_{L^{\frac{2d}{d-\alpha}}}\|\Delta v\|_{L^{\frac{2d}{d-\beta}}}\|u+v\|_{L^{\frac{2d}{\alpha+\beta}}}.
\end{align*}
Due the Sobolev embedding \eqref{sobolevembed}, we have that, for $t\geq T^*$ and $T^*<\infty$ large enough, the previous inequality simplifies to

\begin{align*}
\frac{d}{dt}\left(\|u\|_{\dot{H}^{2}}^2+\|v\|_{\dot{H}^{2}}^2\right)&\leq -2\|u\|_{\dot{H}^{2+\alpha/2}}^2+C\|u\|_{\dot{H}^{2+\alpha/2}}\|[\Delta,\nabla \psi]\cdot\nabla u\|_{L^{\frac{2d}{d+\alpha}}}\\
&\quad+C\|u\|_{\dot{H}^{2+\alpha/2}}\|\nabla u\|_{L^{\frac{4d}{d+\alpha}}}\|\nabla (u-v)\|_{L^{\frac{4d}{d+\alpha}}}\\
&\quad +C\|v\|_{\dot{H}^{2+\beta/2}}\|\nabla v\|_{L^{\frac{4d}{d+\beta}}}\|\nabla (u-v)\|_{L^{\frac{4d}{d+\beta}}}\\
&\quad-2\|v\|_{\dot{H}^{2+\beta/2}}^2+C\|v\|_{\dot{H}^{2+\beta/2}} \|[\Delta,\nabla \psi]\cdot\nabla v \|_{L^{\frac{2d}{d+\beta}}}.
\end{align*}

Lemma \ref{commutator2} together with \eqref{riesz1} and \eqref{riesz2} give us the following estimates
\begin{multline}\label{comm}
\|[\Delta,\nabla\psi]\nabla u\|_{L^{\frac{2d}{d+\alpha}}}\\
\leq C\left(\|u-v\|_{L^{\frac{2d}{\alpha}}}\|\Delta u\|_{L^{2}}+\|\nabla(u-v)\|_{L^{\frac{4d}{d+\alpha}}}\|\nabla u\|_{L^{\frac{4d}{d+\alpha}}}\right),
\end{multline}
\begin{multline}\label{comm2}
\|[\Delta,\nabla\psi]\nabla v\|_{L^{\frac{2d}{d+\beta}}}\\
\leq C\left(\|u-v\|_{L^{\frac{2d}{\beta}}}\|\Delta v\|_{L^{2}}+\|\nabla(u-v)\|_{L^{\frac{4d}{d+\beta}}}\|\nabla v\|_{L^{\frac{4d}{d+\beta}}}\right).
\end{multline}

Consequently, due to the interpolation inequality \eqref{interpoincare} with $r=2$, we can further simplify and get
\begin{align*}
\frac{d}{dt}\left(\|u\|_{\dot{H}^{2}}^2+\|v\|_{\dot{H}^{2}}^2\right)&\leq -\|u\|_{\dot{H}^{2+\alpha/2}}^2-\|v\|_{\dot{H}^{2+\beta/2}}^2\\
&\quad+C\|u-v\|_{L^{\frac{2d}{\beta}}}^2\|v\|_{L^{2}}^2+C\|u-v\|_{L^{\frac{2d}{\alpha}}}^2\|u\|_{L^{2}}^2\\
&\quad+C\|u\|_{\dot{H}^{2+\alpha/2}}\|\nabla u\|_{L^{\frac{4d}{d+\alpha}}}\|\nabla (u-v)\|_{L^{\frac{4d}{d+\alpha}}}\\
&\quad +C\|v\|_{\dot{H}^{2+\beta/2}}\|\nabla v\|_{L^{\frac{4d}{d+\beta}}}\|\nabla (u-v)\|_{L^{\frac{4d}{d+\beta}}}.
\end{align*}

Using the Sobolev embedding
\begin{align*}
\|\nabla f\|_{L^{\frac{4d}{d+s}}}&\leq C\|f\|_{\dot{H}^{1+\frac{d-s}{4}}}\leq C\|\Lambda^{1-\frac{s}{4}}f\|_{\dot{H}^{\frac{d}{4}}},
\end{align*}
and the interpolation inequality \eqref{interpolationsob} we have that
\begin{align*}
I_1&=\|u\|_{\dot{H}^{2+\alpha/2}}\|\nabla u\|_{L^{\frac{4d}{d+\alpha}}}\|\nabla (u-v)\|_{L^{\frac{4d}{d+\alpha}}}\\
&\leq C\|u\|_{\dot{H}^{2+\alpha/2}}\|\Lambda^{1-\frac{\alpha}{4}}u\|_{\dot{H}^{\frac{d}{4}}}\left(\|\Lambda^{1-\frac{\alpha}{4}}u\|_{\dot{H}^{\frac{d}{4}}}+\|\Lambda^{1-\frac{\alpha}{4}}v\|_{\dot{H}^{\frac{d}{4}}}\right)\\
&\leq C\|u\|_{\dot{H}^{2+\alpha/2}}^{1+\frac{d}{4+3\alpha}}\|\Lambda^{1-\frac{\alpha}{4}}u\|_{L^2}^{1-\frac{d}{4+3\alpha}}\left(\|\Lambda^{1-\frac{\alpha}{4}}u\|_{\dot{H}^{\frac{d}{4}}}+\|\Lambda^{1-\frac{\alpha}{4}}v\|_{\dot{H}^{\frac{d}{4}}}\right)\\
&\leq C\|u\|_{\dot{H}^{2+\alpha/2}}^{1+\frac{d}{4+3\alpha}}\|u\|_{\dot{H}^{1-\frac{\alpha}{4}}}^{1-\frac{d}{4+3\alpha}}\left(\|u\|_{\dot{H}^{1-\frac{\alpha}{4}}}^{1-\frac{d}{4+3\alpha}}\|u\|_{\dot{H}^{2+\alpha/2}}^{\frac{d}{4+3\alpha}}+\|v\|_{\dot{H}^{2+\beta/2}}^{\frac{d}{4+\alpha+2\beta}}\|v\|_{\dot{H}^{1-\frac{\alpha}{4}}}^{1-\frac{d}{4+\alpha+2\beta}}\right),
\end{align*}
and
\begin{align*}
I_2&=\|v\|_{\dot{H}^{2+\beta/2}}\|\nabla v\|_{L^{\frac{4d}{d+\beta}}}\|\nabla (u-v)\|_{L^{\frac{4d}{d+\beta}}}\\
&\leq C\|v\|_{\dot{H}^{2+\beta/2}}\|\Lambda^{1-\frac{\beta}{4}}v\|_{\dot{H}^{\frac{d}{4}}}\left(\|\Lambda^{1-\frac{\beta}{4}}u\|_{\dot{H}^{\frac{d}{4}}}+\|\Lambda^{1-\frac{\beta}{4}}v\|_{\dot{H}^{\frac{d}{4}}}\right)\\
&\leq C\|v\|_{\dot{H}^{2+\beta/2}}^{1+\frac{d}{4+3\beta}}\|v\|_{\dot{H}^{1-\frac{\beta}{4}}}^{1-\frac{d}{4+3\beta}}\left(\|u\|_{\dot{H}^{1-\frac{\alpha}{4}}}^{1-\frac{d}{4+\alpha+2\beta}}\|u\|_{\dot{H}^{2+\alpha/2}}^{\frac{d}{4+\alpha+2\beta}}+\|v\|_{\dot{H}^{2+\beta/2}}^{\frac{d}{4+3\beta}}\|v\|_{\dot{H}^{1-\frac{\beta}{4}}}^{1-\frac{d}{4+3\beta}}\right).
\end{align*}
We write
$$
I_1+I_2=J_1+J_2+J_3+J_4,
$$
where
\begin{align*}
J_1&=C\|u\|_{\dot{H}^{2+\alpha/2}}^{1+\frac{2d}{4+3\alpha}}\|u\|_{\dot{H}^{1-\frac{\alpha}{4}}}^{2-\frac{2d}{4+3\alpha}}\\
J_2&=C\|v\|_{\dot{H}^{2+\beta/2}}^{1+\frac{2d}{4+3\beta}}\|v\|_{\dot{H}^{1-\frac{\beta}{4}}}^{2-\frac{2d}{4+3\beta}}\\
J_3&=C\|u\|_{\dot{H}^{2+\alpha/2}}^{1+\frac{d}{4+3\alpha}}\|u\|_{\dot{H}^{1-\frac{\alpha}{4}}}^{1-\frac{d}{4+3\alpha}}\|v\|_{\dot{H}^{2+\beta/2}}^{\frac{d}{4+\alpha+2\beta}}\|v\|_{\dot{H}^{1-\frac{\alpha}{4}}}^{1-\frac{d}{4+\alpha+2\beta}}\\
J_4&=C\|v\|_{\dot{H}^{2+\beta/2}}^{1+\frac{d}{4+3\beta}}\|v\|_{\dot{H}^{1-\frac{\beta}{4}}}^{1-\frac{d}{4+3\beta}}\|u\|_{\dot{H}^{1-\frac{\alpha}{4}}}^{1-\frac{d}{4+\alpha+2\beta}}\|u\|_{\dot{H}^{2+\alpha/2}}^{\frac{d}{4+\alpha+2\beta}}\\
\end{align*}
Using hypothesis \eqref{condition}, so that
$$
\frac{2d}{4+3\min\{\alpha,\beta\}}< 1,
$$
we can apply Young's inequality with 
$$
p=2-\frac{4d}{4+3\alpha+2d},\;q=2+\frac{4d}{4+3\alpha-2d}
$$
and, recalling Theorem \ref{thdecay2}, we obtain that
\begin{align*}
J_1&\leq \frac{1}{4}\|u\|_{\dot{H}^{2+\alpha/2}}^{2}+C\|u\|_{\dot{H}^{1-\frac{\alpha}{4}}}^{(2-\frac{2d}{4+3\alpha})q}\\
&\leq \frac{1}{4}\|u\|_{\dot{H}^{2+\alpha/2}}^{2}+\frac{C}{(1+t)^{\theta_1}},
\end{align*}
where
\begin{equation}\label{theta1}
\theta_1=\frac{d}{\max\{\alpha,\beta\}}\frac{\alpha}{8}\left(2-\frac{2d}{4+3\alpha}\right)q=\frac{d}{\max\{\alpha,\beta\}}\frac{\alpha}{4}\frac{8+6\alpha-2d}{4+3\alpha-2d}.
\end{equation}
We need to have $\theta>1$. Then, in the case where $\beta=\max\{\alpha,\beta\}$ and $\alpha\ll1$, the previous exponent may be arbitrarily small. However, in the case where \eqref{condition2} holds, we have that
$$
\theta_1\geq\frac{\min\{\alpha,\beta\}}{\max\{\alpha,\beta\}}\frac{d}{4}\left(2+\frac{2d}{4+3\max\{\alpha,\beta\}-2d}\right)>1.
$$
Applying Young's inequality now with
$$
p=2-\frac{4d}{4+3\beta+2d},\;q=2+\frac{4d}{4+3\beta-2d},
$$
we have that
\begin{align*}
J_2&\leq \frac{1}{4}\|v\|_{\dot{H}^{2+\beta/2}}^{2}+C\|v\|_{\dot{H}^{1-\frac{\beta}{4}}}^{(2-\frac{2d}{4+3\beta})q}\\
&\leq \frac{1}{4}\|v\|_{\dot{H}^{2+\alpha/2}}^{2}+\frac{C}{(1+t)^{\theta_2}},
\end{align*}
where
\begin{equation}\label{theta2}
\theta_2=\frac{d}{\max\{\alpha,\beta\}}\frac{\beta}{8}\left(2-\frac{2d}{4+3\beta}\right)q=\frac{d}{\max\{\alpha,\beta\}}\frac{\beta}{4}\frac{8+6\beta-2d}{4+3\beta-2d}.
\end{equation}
Thus, using hypothesis \eqref{condition2}, we have that
$$
\theta_2>1.
$$
Using again Young's inequality with
$$
p=2-\frac{2d}{4+3\alpha+d},\;q=2+\frac{2d}{4+3\alpha-d}
$$
\begin{align*}
J_3&\leq
\frac{1}{4}\|u\|_{\dot{H}^{2+\alpha/2}}^{2}+C\|u\|_{\dot{H}^{1-\frac{\alpha}{4}}}^{(1-\frac{d}{4+3\alpha})q}\|v\|_{\dot{H}^{2+\beta/2}}^{\lambda}\|v\|_{\dot{H}^{1-\frac{\alpha}{4}}}^{(1-\frac{d}{4+\alpha+2\beta})q}.
\end{align*}
Due to hypothesis \eqref{condition3} the exponent is
\begin{align*}
\lambda&=\frac{d}{4+\alpha+2\beta}q\\
&=\frac{d}{4+3\alpha-d}\left(2+\frac{4(\alpha-\beta)}{4+\alpha+2\beta}\right)\\
&\leq \frac{d}{4+3\min\{\alpha,\beta\}-d}\left(2+\frac{4|\alpha-\beta|}{4+3\min\{\alpha,\beta\}}\right)\\
&\leq 2.
\end{align*}
Assume that $\lambda<2$ (if $\Gamma=2$, we can finish with $J_3$ straightforwardly by waiting for a large enough time and applying Theorem \ref{thdecay2}), thus, we can apply Young's inequality again 
$$
P=\frac{2}{\lambda},\;Q=\frac{2}{2-\lambda}
$$
and obtain
\begin{align*}
J_3&\leq
\frac{1}{4}\|u\|_{\dot{H}^{2+\alpha/2}}^{2}+\frac{1}{4}\|v\|_{\dot{H}^{2+\beta/2}}^{2}+C\|u\|_{\dot{H}^{1-\frac{\alpha}{4}}}^{(1-\frac{d}{4+3\alpha})qQ}\|v\|_{\dot{H}^{1-\frac{\alpha}{4}}}^{(1-\frac{d}{4+\alpha+2\beta})qQ}\\
&\leq\frac{1}{4}\|u\|_{\dot{H}^{2+\alpha/2}}^{2}+\frac{1}{4}\|v\|_{\dot{H}^{2+\beta/2}}^{2}+\frac{C}{(1+t)^{\theta_3}}.
\end{align*}
Notice that the condition
$$
\left(1-\frac{d}{4+3\alpha}\right)qQ>2
$$
is implied by the stricter condition
$$
\left(1-\frac{d}{4+3\alpha}\right)Q>1,
$$
or, equivalently,
$$
\frac{2}{4+3\alpha}<\frac{q}{4+\alpha+2\beta}=\frac{2+\frac{2d}{4+3\alpha-d}}{4+\alpha+2\beta}.
$$
A further computation shows that this latter condition is implied by hypothesis \eqref{condition4}
$$
\frac{4+\alpha+2\beta}{4+3\alpha}\leq \frac{4+3\max\{\alpha,\beta\}}{4+3\min\{\alpha,\beta\}} <1+\frac{d}{4+3\max\{\alpha,\beta\}-d}.
$$
Then, using Theorem \ref{thdecay2}, the integrability condition $\theta_3>1$
is implied by
$$
\frac{d}{\max\{\alpha,\beta\}}\frac{\alpha}{2}\geq1,
$$
and hypothesis \eqref{condition5}. 

The term $J_4$ is akin to $J_3$ and can be handled similarly. Then, we obtain
$$
\frac{d}{dt}\left(\|u\|_{\dot{H}^{2}}^2+\|v\|_{\dot{H}^{2}}^2\right)\leq \frac{C}{(1+t)^{\Theta}},\,\forall\,t\geq T^*,
$$
and $\Theta>1$. Thus,
$$
\|u(t)\|_{\dot{H}^{2}}^2+\|v(t)\|_{\dot{H}^{2}}^2\leq C,\,\forall\,t\geq 0.
$$

\textbf{Step 2: Decay in $H^s$} Using \eqref{interpolationsob}, we have
\begin{equation}\label{boundH2gamma}
\|u(t)\|_{\dot{H}^{s}}+\|v(t)\|_{\dot{H}^{s}}\leq \frac{C}{(1+t)^{\frac{d}{\max\{\alpha,\beta\}}\frac{2-s}{4}}},\,\forall\,t\geq 0,
\end{equation}

\section{Proof of Theorem \ref{thdecay4}: Decay estimates in Sobolev spaces $H^s$, $s>2$}\label{sec5}
Let us fix $\delta=2-d/2$. Testing against $\Lambda^{2s}u$ we have the following estimate
\begin{align*}
\frac{1}{2}\frac{d}{dt}\|u\|_{\dot{H}^{s}}^2&=-\|u\|_{\dot{H}^{s+\alpha/2}}^2-\int_{\RR^d}\Lambda^{s}u \Lambda^s\nabla\cdot (u \nabla \psi) dx\\
&=-\|u\|_{\dot{H}^{s+\alpha/2}}^2-\int_{\RR^d}\Lambda^{s}u [\Lambda^s\nabla,\nabla \psi]u dx-\int_{\RR^d}\Lambda^{s}u \Lambda^s\nabla u \cdot\nabla \psi dx\\
&=-\|u\|_{\dot{H}^{s+\alpha/2}}^2-\int_{\RR^d}\Lambda^{s}u [\Lambda^s\nabla,\nabla \psi]u dx+\frac{1}{2}\int_{\RR^d}|\Lambda^s u|^2 (u-v) dx.
\end{align*}
Similarly
\begin{align*}
\frac{1}{2}\frac{d}{dt}\|v\|_{\dot{H}^{s}}^2&=-\|v\|_{\dot{H}^{s+\beta/2}}^2+\int_{\RR^d}\Lambda^{s}v \Lambda^s\nabla\cdot (v \nabla \psi) dx\\
&=-\|v\|_{\dot{H}^{s+\beta/2}}^2+\int_{\RR^d}\Lambda^{s}v [\Lambda^s\nabla,\nabla \psi]v dx+\int_{\RR^d}\Lambda^{s}v \Lambda^s\nabla v \cdot\nabla \psi dx\\
&=-\|v\|_{\dot{H}^{s+\beta/2}}^2+\int_{\RR^d}\Lambda^{s}v [\Lambda^s\nabla,\nabla \psi]v dx-\frac{1}{2}\int_{\RR^d}|\Lambda^s v|^2 (u-v) dx\\
\end{align*}
Using Lemma \ref{commutator}, we have that
\begin{align*}
\|[\Lambda^s\nabla,\nabla \psi]f\|_{L^2}&\leq C\left(\|\Lambda^s f\|_{L^2}\|\widehat{\Lambda \nabla\psi}\|_{L^1}+\|\Lambda^{s+1} \nabla\psi\|_{L^2}\|\widehat{f}\|_{L^1}\right)\\
&\leq C\left(\|\Lambda^s f\|_{L^2}\|\widehat{\Delta \psi}\|_{L^1}+\|\Lambda^{s} \Delta\psi\|_{L^2}\|\widehat{f}\|_{L^1}\right)\\
&\leq C\left(\|\Lambda^s f\|_{L^2}\left(\|\hat{u}\|_{L^1}+\|\hat{v}\|_{L^1}\right)\right.\\
&\quad\left.+\left(\|\Lambda^{s} u\|_{L^2}+\|\Lambda^{s} v\|_{L^2}\right)\|\widehat{f}\|_{L^1}\right).
\end{align*}
That means that
\begin{align*}
\frac{d}{dt}\left(\|u\|_{\dot{H}^{s}}^2+\|v\|_{\dot{H}^{s}}^2\right)&\leq -2\|u\|_{\dot{H}^{s+\alpha/2}}^2-2\|v\|_{\dot{H}^{s+\beta/2}}^2\\
&\quad +C\left(\| u\|_{\dot{H}^{s}}^2\left(\|\hat{u}\|_{L^1}+\|\hat{v}\|_{L^1}\right)\right.\\
&\quad+\left(\|u\|_{\dot{H}^{s}}^2+\|u\|_{\dot{H}^{s}}\|v\|_{\dot{H}^{s}}\right)\|\widehat{u}\|_{L^1}\\
&\quad +\|v\|_{\dot{H}^{s}}^2\left(\|\hat{u}\|_{L^1}+\|\hat{v}\|_{L^1}\right)\\
&\quad\left.+\left(\|u\|_{\dot{H}^{s}}\|v\|_{\dot{H}^{s}}+\|v\|_{\dot{H}^{s}}^2\right)\|\widehat{v}\|_{L^1}\right),
\end{align*}
where we have used the inequality
$$
\|f\|_{L^\infty}\leq \|\hat{f}\|_{L^1}.
$$
We obtain that
\begin{align}
\frac{1}{2}\left(\frac{d}{dt}\|u\|_{\dot{H}^{s}}^2+\frac{d}{dt}\|v\|_{\dot{H}^{s}}^2\right)&\leq -\|u\|_{\dot{H}^{s+\alpha/2}}^2-\|v\|_{\dot{H}^{s+\beta/2}}^2\nonumber\\
&\quad +C\left(\| v\|_{\dot{H}^{s}}^2+\|u\|_{\dot{H}^{s}}^2\right)\left(\|\hat{u}\|_{L^1}+\|\hat{v}\|_{L^1}\right).\label{eq:a}
\end{align}
Using Lemma \ref{lemmainterpolation} (inequality \eqref{eq:10b}) and Theorem \ref{thdecay3}, we have that
\begin{align*}
\|\hat{u}\|_{L^1(\RR^d)}+\|\hat{v}\|_{L^1(\RR^d)}&\leq C(1+t)^{-\frac{\delta}{2}\frac{d/2}{\max\{\alpha,\beta\}}}\left(\|u\|_{\dot{H}^{2}(\RR^d)}^{\frac{d/2}{2}}+\|v\|_{\dot{H}^{2}(\RR^d)}^{\frac{d/2}{2}}\right)\\
&\leq C(1+t)^{-\frac{\delta}{2}\frac{d/2}{\max\{\alpha,\beta\}}}.
\end{align*}
Thus, waiting for a large enough time $T^*$ and using \eqref{interpoincare} and 
$$
\int_{T^*}^t\|u\|_{L^2}^2+\|v\|_{L^2}^2=C<\infty, 
$$
we conclude
$$
\|u\|_{\dot{H}^{s}}^2+\|v\|_{\dot{H}^{s}}^2\leq C,\,\forall\,t\geq T^*.
$$
Considering the \emph{a priori} estimates in the finite interval $[0,T^*]$ we conclude
\begin{equation}\label{boundHs}
\|u\|_{\dot{H}^{s}}^2+\|v\|_{\dot{H}^{s}}^2\leq C,\,\forall\,t\geq 0.
\end{equation}
Now the decay follows from interpolation \eqref{interpolationsob}.

\section{Proof of Proposition \ref{thasym}: Asymptotic profile}\label{sec6}
\textbf{Step 1: Decay of the potential $\psi$} This step is similar to the one in \cite{yamamoto2015asymptotic}. We have
$$
\psi=\Delta^{-1}(u-v),
$$
so, using Theorem \ref{thdecay}, we have that
\begin{align*}
\nabla\psi&=C_d\int_{\RR^d}\frac{x_i-y_i}{|x-y|^d}(u(y)-v(y))dy\\
&=C_d\left(\int_{|y|\leq(1+t)^r}+\int_{|y|>(1+t)^r}\right)\frac{x_i-y_i}{|x-y|^d}(u(y)-v(y))dy\\
&\leq C\left(\|u\|_{L^\infty}+\|v\|_{L^\infty}\right)(1+t)^r+\left(\|u\|_{L^1}+\|v\|_{L^1}\right)(1+t)^{r(d-1)}\\
&\leq C\left(1+t\right)^{r-\frac{d}{\max\{\alpha,\beta\}}}+C(1+t)^{-r(d-1)}.
\end{align*}
We choose $r=\frac{1}{\max\{\alpha,\beta\}}$, thus, we obtain
\begin{equation}\label{potential}
\|\nabla\psi\|_{L^\infty}\leq \frac{C}{(1+t)^{\frac{d-1}{\max\{\alpha,\beta\}}}}.
\end{equation}

\textbf{Step 2: Mild solution} Using Duhamel's principle, the mild solutions are given by
\begin{align*}
u(t)-e^{-t\Lambda^{\alpha}}u_0&=-\int_0^te^{-(t-s)\Lambda^{\alpha}}\nabla\cdot(u(s)\nabla\psi(s))ds\\
v(t)-e^{-t\Lambda^{\beta}}v_0&=\int_0^te^{-(t-s)\Lambda^{\beta}}\nabla\cdot(v(s)\nabla\psi(s))ds.
\end{align*}

\textbf{Step 3: Estimate on the difference} Using the hypercontractive inequality
$$
\|e^{-t\Lambda^\alpha}h\|_{L^2}\leq Ct^{-\frac{d}{2\alpha}}\|h\|_{L^1}
$$
we have that
\begin{align*}
\|u(t)-e^{-t\Lambda^{\alpha}}u_0\|_{L^2}&=\left\|\int_0^te^{-(t-s)\Lambda^{\alpha}}\nabla\cdot(u(s)\nabla\psi(s))ds\right\|_{L^2}\\
&\leq C\int_0^{t/2}\frac{f(s)}{(t-s)^{\frac{d}{2\alpha}}}ds+C\int_{t/2}^t g(s)ds,
\end{align*}
where the forcing are
\begin{align*}
f(s)&=\left\|u(s)(u(s)-v(s))\right\|_{L^1}\\
&\leq \frac{C}{(1+s)^{\frac{d}{\max\{\alpha,\beta\}}}}
\end{align*}
and
\begin{align*}
g(s)&=\left\|\nabla u(s)\cdot \nabla\psi(s)\right\|_{L^2}+\left\|u(s)(u(s)-v(s))\right\|_{L^2}\\
&\leq \frac{C}{(1+s)^{\frac{d-1}{\max\{\alpha,\beta\}}}}+\frac{C}{(1+s)^{\frac{3}{4}\frac{d}{\max\{\alpha,\beta\}}}},
\end{align*}
Thus, using
$$
\int_{0}^{t/2}\frac{C}{(t-s)^{\frac{d}{2\alpha}}(1+s)^{\frac{d}{\max\{\alpha,\beta\}}}}ds\leq \frac{C}{t^{\frac{d}{2\alpha}}}\int_{0}^{t/2}\frac{C}{(1+s)^{\frac{d}{\max\{\alpha,\beta\}}}}ds\leq\frac{C}{t^{\frac{d}{2\alpha}}}\frac{C}{(1+t)^{\frac{d}{\max\{\alpha,\beta\}}-1}}
$$
\begin{align*}
\|u(t)-e^{-t\Lambda^{\alpha}}u_0\|_{L^2}&\leq \frac{C}{(1+t)^{\frac{d}{\max\{\alpha,\beta\}}-1}t^{\frac{d}{2\alpha}}}+\frac{C}{(1+t)^{\frac{d-1}{\max\{\alpha,\beta\}}-1}}+\frac{C}{(1+t)^{\frac{3}{4}\frac{d}{\max\{\alpha,\beta\}}-1}}\\
&\leq \frac{C}{(1+t)^{\frac{d-1}{\max\{\alpha,\beta\}}-1}}
\end{align*}
In the same way,
\begin{align*}
\|v(t)-e^{-t\Lambda^{\beta}}v_0\|_{L^2}&\leq \frac{C}{(1+t)^{\frac{d-1}{\max\{\alpha,\beta\}}-1}}
\end{align*}

\appendix

\section{Inequalities for the fractional Laplacian}\label{sec7}
In this appendix we recall several inequalities involving the fractional Laplacian.
\begin{lem}\label{lemaaux3}
Let $h\in \mathcal{S}(\RR^d)$ be a Schwartz function. We write $h(x^*):=\max_x h(x)$, $h(x_*):=\min_x h(x)$ and 
$$
\|h\|_{L^p\cap L^\infty}:=\max\{\|h\|_{L^p},\|h\|_{L^\infty}\}.
$$ 
Then
\begin{itemize}
\item if $h(x^*)>0$,
$$
\Lambda^\alpha h(x^*)\geq c(d,\alpha,p)\frac{h(x^*)^{1+\alpha p/d}}{\|h\|^{\alpha p/d}_{L^p}},
$$
\item if $h(x_*)<0$,
$$
\Lambda^\alpha h(x_*)\leq c(d,\alpha,p)\frac{h(x_*)|h(x_*)|^{\alpha p/d}}{\|h\|^{\alpha p/d}_{L^p}},
$$
\end{itemize}
These bounds implies the norm
$$
\|e^{-\Lambda^\alpha t}\|_{L^p\cap L^\infty (\RR^d)\rightarrow L^\infty (\RR^d)}\leq \frac{C(d,\alpha,p)}{(t +1)^{d/(\alpha p)}}.
$$
\end{lem}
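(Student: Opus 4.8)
The plan is to prove the pointwise bound at a global maximum first, deduce the statement at a minimum by applying it to $-h$, and then turn the pointwise inequality into the semigroup decay estimate through a Bernoulli-type differential inequality for $t\mapsto\|e^{-\Lambda^\alpha t}h\|_{L^\infty}$, in the same spirit as Steps 3--4 of the proof of Theorem \ref{thdecay}.

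For the pointwise bound, assume $h(x^*)>0$ and start from the singular integral representation \eqref{eq:8}. Since $x^*$ is a global maximum, $h(x^*)-h(x^*-\eta)\geq0$ for every $\eta$, so the principal value is superfluous, the integrand is nonnegative, and (as $0<\alpha<2$) a second order Taylor expansion at $\eta=0$ shows the integral converges absolutely. Fix $R>0$ and discard the (nonnegative) contribution of $\{|\eta|\leq R\}$ to get
$$
\Lambda^\alpha h(x^*)\geq c_{\alpha,d}\left(h(x^*)\int_{|\eta|>R}\frac{d\eta}{|\eta|^{d+\alpha}}-\int_{|\eta|>R}\frac{h(x^*-\eta)}{|\eta|^{d+\alpha}}\,d\eta\right).
$$
The first integral equals $c_1(d,\alpha)R^{-\alpha}$; the second I would bound, via H\"older's inequality with exponents $p$ and $p'=p/(p-1)$ (or the trivial bound $|\eta|^{-d-\alpha}\leq R^{-d-\alpha}$ when $p=1$), by $c_2(d,\alpha,p)\|h\|_{L^p}R^{-\alpha-d/p}$, the relevant integral converging because $(d+\alpha)p'>d$. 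Balancing the two terms by choosing $R^{d/p}\sim\|h\|_{L^p}/h(x^*)$ then yields
$$
\Lambda^\alpha h(x^*)\geq c(d,\alpha,p)\,h(x^*)\,R^{-\alpha}=c(d,\alpha,p)\,\frac{h(x^*)^{1+\alpha p/d}}{\|h\|_{L^p}^{\alpha p/d}}.
$$
Applying this to $-h$ (whose maximum is $-h(x_*)>0$ and whose $L^p$ norm equals $\|h\|_{L^p}$) and rearranging gives the stated inequality at $x_*$.

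For the semigroup bound, take $1\leq p<\infty$ (so that $\alpha p/d$ is finite), set $w(t)=e^{-\Lambda^\alpha t}h$ and $A=\|h\|_{L^p\cap L^\infty}$. Because the kernel of $e^{-\Lambda^\alpha t}$ is a probability density, Young's inequality gives $\|w(t)\|_{L^p}\leq\|h\|_{L^p}\leq A$ for all $t$. Writing $M(t)=\|w(t)\|_{L^\infty}$, the Rademacher argument used for $\mathcal F_\infty$ in the proof of Theorem \ref{thdecay} shows $M$ is locally Lipschitz with, for a.e.\ $t$, $\frac{d}{dt}M(t)=\pm\pat w(z_t,t)$ where $z_t$ is an extremal point of $w(t)$ and the sign is chosen so that the pointwise bound above (applied to $w(t)$, using $\pat w=-\Lambda^\alpha w$) forces
$$
\frac{d}{dt}M(t)\leq -\frac{c(d,\alpha,p)}{\|w(t)\|_{L^p}^{\alpha p/d}}\,M(t)^{1+\alpha p/d}\leq-\frac{c(d,\alpha,p)}{A^{\alpha p/d}}\,M(t)^{1+\alpha p/d}.
$$
With $\gamma=\alpha p/d>0$ this is $\frac{d}{dt}(M^{-\gamma})\geq c\gamma A^{-\gamma}$, hence $M(t)^{-\gamma}\geq M(0)^{-\gamma}+c\gamma A^{-\gamma}t\geq A^{-\gamma}(1+c\gamma t)$ since $M(0)=\|h\|_{L^\infty}\leq A$, so that $M(t)\leq C(d,\alpha,p)\,\|h\|_{L^p\cap L^\infty}\,(1+t)^{-d/(\alpha p)}$; the case of general $h\in L^p\cap L^\infty$ follows by density of Schwartz functions together with the $L^p$-continuity of $e^{-\Lambda^\alpha t}$.

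All of these computations are routine; the points that need a little care are the sign and absolute convergence of the singular integral at an extremum, the balancing choice of the cutoff radius $R$, and the measure-theoretic bookkeeping required to differentiate $M(t)$ and apply the pointwise inequality to $w(t)$ for a.e.\ $t>0$. I do not expect a serious obstacle, since that last point is handled exactly as for $\mathcal F_\infty[u,v]$ in the proof of Theorem \ref{thdecay}.
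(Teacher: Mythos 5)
Your proposal is correct, and the pointwise estimate at the maximum is obtained by a genuinely different decomposition than the one in the paper. The paper fixes a ball $B(0,r)$ and splits it into the level set $\mathcal{U}_1=\{\eta\in B(0,r): h(x^*)-h(x^*-\eta)>h(x^*)/2\}$ and its complement $\mathcal{U}_2$; a Chebyshev-type estimate gives $|\mathcal{U}_2|\leq (2\|h\|_{L^p}/h(x^*))^p$, so that for $r$ with $\omega_d r^d=2(2\|h\|_{L^p}/h(x^*))^p$ the set $\mathcal{U}_1$ occupies at least half of $B(0,r)$ and the kernel restricted to $\mathcal{U}_1$ already produces the claimed lower bound. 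You instead keep only the far field $\{|\eta|>R\}$, split $h(x^*)-h(x^*-\eta)$ into its two summands, and control the tail $\int_{|\eta|>R}|h(x^*-\eta)|\,|\eta|^{-d-\alpha}d\eta$ by H\"older against the $L^{p'}$ norm of the kernel; balancing $R$ gives the same scaling $h(x^*)^{1+\alpha p/d}\|h\|_{L^p}^{-\alpha p/d}$. Both arguments are sound and yield identical exponents; the paper's level-set version uses only the positivity and homogeneity of the kernel (no integrability of its tail in $L^{p'}$ is invoked), while yours is shorter and makes the choice of the cutoff radius more transparent. Your reduction of the minimum case to the maximum case by passing to $-h$ is cleaner than the paper's, which repeats the construction (and in doing so misprints $x^*$ for $x_*$ in the definition of $\mathcal{U}_1$). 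The semigroup estimate is handled exactly as in the paper, via the Bernoulli differential inequality for $\|e^{-\Lambda^\alpha t}h\|_{L^\infty}$; your explicit remarks that $\|e^{-\Lambda^\alpha t}h\|_{L^p}\leq\|h\|_{L^p}$ and that $M(0)\leq\|h\|_{L^p\cap L^\infty}$ fill in details the paper leaves implicit.
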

\begin{proof}\textbf{Step 1;} Let's assume that $h$ takes both signs. Then we have $h(x^*)=\max_x h(x)>0$. We take $r>0$ a positive number and define 
$$
\mathcal{U}_1=\{\eta\in B(0,r) \;s.t.\; h(x^*)-h(x^*-\eta)>h(x^*)/2 \},
$$
and $\mathcal{U}_2=B(0,r)-\mathcal{U}_1$. We have
$$
\|h\|_{L^p}^p=\int_{\RR^d}|h(x^*-\eta)|^p d\eta\geq \int_{\mathcal{U}_2}|h(x^*-\eta)|^pd\eta\geq\frac{ |h(x^*)|^p}{2^p}|\mathcal{U}_2|,
$$
so,
\begin{equation}\label{eqappaux}
-\left(\frac{2\|h\|_{L^p}}{|h(x^*)|}\right)^p\leq -|\mathcal{U}_2|.
\end{equation}
\begin{eqnarray*}
\Lambda^\alpha h(x^*)&=&c_{\alpha,d}\text{P.V.}\int_{\RR^d}\frac{h(x^*)-h(x^*-\eta)}{|\eta|^{d+\alpha}}d\eta\\
&\geq& c_{\alpha,d}\text{P.V.}\int_{\mathcal{U}_1}\frac{h(x^*)-h(x^*-\eta)}{|\eta|^{d+\alpha}}d\eta\\
&\geq& c_{\alpha,d}\frac{h(x^*)}{2r^{d+\alpha}}|\mathcal{U}_1|\\
&\geq& c_{\alpha,d}\frac{h(x^*)}{2r^{d+\alpha}}\left(\omega_d r^d-|\mathcal{U}_2|\right)\\
&\geq& c_{\alpha,d}\frac{h(x^*)}{2r^{d+\alpha}}\left(\omega_d r^d-\left(\frac{2\|h\|_{L^p}}{h(x^*)}\right)^p\right),
\end{eqnarray*}
where we have used
$$
|B(0,r)|-|\mathcal{U}_2|=|\mathcal{U}_1|.
$$
We take $r$ such that
$$
\omega_d r^d=2\left(\frac{2\|h\|_{L^p}}{h(x^*)}\right)^p,
$$
thus
$$
\Lambda^\alpha h(x^*)\geq c_{\alpha,d}\frac{h(x^*)2^p\left(\frac{\|h\|_{L^p}}{h(x^*)}\right)^p}{2\left(\left(\frac{2\|h\|_{L^p}}{h(x^*)}\right)^{p/d}\left(\frac{2}{\omega_d}\right)^{1/d}\right)^{d+\alpha}}=c(d,\alpha,p)\frac{h(x^*)^{1+\alpha p/d}}{\|h\|^{\alpha p/d}_{L^p}}.
$$
\textbf{Step 2;} We have $h(x_*)=\min_x h(x)<0$. As before, we take $r>0$ a positive number and define 
$$
\mathcal{U}_1=\{\eta\in B(0,r) \;s.t.\; h(x^*)-h(x^*-\eta)<h(x^*)/2 \},
$$
and $\mathcal{U}_2=B(0,r)-\mathcal{U}_1$. In the same way, we obtain inequality \eqref{eqappaux}.
With the appropriate choice of $r$, we get
\begin{eqnarray*}
\Lambda^\alpha h(x_*)&\leq& c_{\alpha,d}\frac{h(x_*)}{2r^{d+\alpha}}\left(\omega_d r^d-\left(\frac{2\|h\|_{L^p}}{|h(x_*)|}\right)^p\right)\\
&\leq& c(d,\alpha,p)\frac{h(x_*)|h(x_*)|^{\alpha p/d}}{\|h\|^{\alpha p/d}_{L^p}}.
\end{eqnarray*}
\textbf{Step 3;} Now, we have
$$
\frac{d}{dt}\|e^{-\Lambda^\alpha t}h\|_{L^\infty}\leq -c(d,\alpha,p)\frac{\|e^{-\Lambda^\alpha t}h\|_{L^\infty}^{1+\alpha p/d}}{\|h\|^{\alpha p/d}_{L^p}},
$$
and, integrating,
$$
\|e^{-\Lambda^\alpha t}h\|_{L^\infty}\leq C(d,\alpha,p)\frac{\max\{\|h\|_{L^p},\|h\|_{L^\infty}\}}{(t +1)^{d/(\alpha p)}}.
$$
\end{proof}

\section{Commutator estimates}\label{sec8}
We prove now a commutator estimate akin to the one in \cite{Chae2singularSQG}:
\begin{lem}\label{commutator}Fix $s\geq0$. Then the following estimate holds true
$$
\|[\Lambda^s\nabla,g]f\|_{L^2}\leq C\left(\|\Lambda^s f\|_{L^2}\|\widehat{\Lambda g}\|_{L^1}+\|\Lambda^{s+1} g\|_{L^2}\|\widehat{f}\|_{L^1}\right).
$$
\end{lem}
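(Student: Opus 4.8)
The plan is to work entirely on the Fourier side. Writing $[\Lambda^s\nabla,g]f=\Lambda^s\nabla(gf)-g\,\Lambda^s\nabla f$ and using that $\Lambda^s\nabla$ has (vector-valued) Fourier symbol $|\xi|^s i\xi$, the convolution theorem gives
$$
\widehat{[\Lambda^s\nabla,g]f}(\xi)=c_d\int_{\RR^d}\widehat{g}(\xi-\eta)\left(|\xi|^s i\xi-|\eta|^s i\eta\right)\widehat{f}(\eta)\,d\eta,
$$
so by Plancherel it suffices to control the $L^2_\xi$ norm of the right-hand side, and the whole difficulty is concentrated in the symbol $m(\xi,\eta)=|\xi|^s\xi-|\eta|^s\eta$.

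The key elementary inequality — and the step I expect to be the main obstacle — is
$$
\big|\,|\xi|^s\xi-|\eta|^s\eta\,\big|\leq C\left(|\eta|^s|\xi-\eta|+|\xi-\eta|^{s+1}\right),\qquad s\geq0.
$$
I would prove it by splitting into the regions $|\xi-\eta|\geq|\eta|$ and $|\xi-\eta|<|\eta|$. In the first region $|\xi|\leq 2|\xi-\eta|$, so the left side is bounded by $|\xi|^{s+1}+|\eta|^{s+1}\leq C|\xi-\eta|^{s+1}$. In the second region I would apply the mean value theorem to $F(z)=|z|^sz$ along the segment joining $\eta$ and $\xi$: since $|\xi-\eta|<|\eta|$ this segment avoids the origin, $F$ is $C^1$ there with $|DF(z)|\leq C|z|^s$, and every point $z$ of the segment satisfies $|z|\leq|\eta|+|\xi-\eta|\leq 2|\eta|$, whence (using $s\geq0$) $|F(\xi)-F(\eta)|\leq C|\eta|^s|\xi-\eta|$. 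The non-smoothness of $|z|^sz$ at the origin for non-integer $s$ is exactly why the case split is needed; it never causes trouble because in the only case where the Taylor-type bound is invoked the segment is bounded away from $0$.

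With this inequality in hand the rest is routine. Bounding the integrand pointwise gives
$$
\big|\widehat{[\Lambda^s\nabla,g]f}(\xi)\big|\leq C\Big(\big(|\cdot|\,|\widehat{g}|\big)*\big(|\cdot|^s|\widehat{f}|\big)\Big)(\xi)+C\Big(\big(|\cdot|^{s+1}|\widehat{g}|\big)*|\widehat{f}|\Big)(\xi).
$$
For the first convolution I apply Young's inequality $\|p*q\|_{L^2}\leq\|p\|_{L^1}\|q\|_{L^2}$ with $p=|\cdot|\,|\widehat g|$ and $q=|\cdot|^s|\widehat f|$, which by Plancherel produces $\|\widehat{\Lambda g}\|_{L^1}\|\Lambda^s f\|_{L^2}$; for the second I use instead $\|p*q\|_{L^2}\leq\|p\|_{L^2}\|q\|_{L^1}$ with $p=|\cdot|^{s+1}|\widehat g|$ and $q=|\widehat f|$, producing $\|\Lambda^{s+1}g\|_{L^2}\|\widehat f\|_{L^1}$. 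Summing these two bounds and using Plancherel once more on the left-hand side yields the claimed estimate, the normalising constant $c_d$ being absorbed into $C$.
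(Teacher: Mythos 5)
Your proof is correct and follows essentially the same route as the paper: pass to the Fourier side, bound the commutator symbol $|\xi|^s\xi-|\eta|^s\eta$ pointwise by $C\left(|\eta|^s|\xi-\eta|+|\xi-\eta|^{s+1}\right)$, and conclude with Young's convolution inequality and Plancherel. The only difference lies in how that symbol bound is obtained --- the paper invokes the quasi-triangle inequality $|\chi|^s\leq 2^{s-1}\left(|\chi-\xi|^s+|\xi|^s\right)$ following the argument of Chae, Constantin, C\'ordoba, Gancedo and Wu, whereas your case split combined with the mean value theorem applied to $F(z)=|z|^s z$ is a valid (and arguably more self-contained) way of arriving at the identical intermediate estimate.
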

\begin{proof}
The proof is similar to the one in \cite{Chae2singularSQG}. After taking the Fourier transform and using the inequality 
$$
|\chi|^s\leq 2^{s-1}\left(|\chi-\xi|^s+|\xi|^s\right),
$$
we have
\begin{align*}
|\widehat{[\Lambda^s\nabla,g]f]}(\chi)|&\leq C\left(\int_{\RR^d}|\chi-\xi|^{s}|\hat{f}(\chi-\xi)||\xi|| \hat{g}(\xi)|d\xi\right.\\
&\quad\left.+\int_{\RR^d}|\hat{f}(\chi-\xi)||\xi|^{1+s} |\hat{g}(\xi)|d\xi\right).
\end{align*}
Then we conclude via Plancherel's Theorem and Young's inequality for convolutions.
\end{proof}
We also recall the classical Kato-Ponce commutator estimate
\begin{lem}\label{commutator2}Fix $s>0$ and  $1<p<\infty$. Then the following estimate holds true
$$
\|[\Lambda^s,g]f\|_{L^p}\leq C\left(\|\nabla g\|_{L^{p_1}}\|\Lambda^{s-1} f\|_{L^{p_2}}+\|\Lambda^{s} g\|_{L^{p_3}}\|f\|_{L^{p_4}}\right),
$$
for
$$
\frac{1}{p}=\frac{1}{p_1}+\frac{1}{p_2}=\frac{1}{p_3}+\frac{1}{p_4},
1<p_1,p_4\leq \infty,\;1<p_2,p_3<\infty.
$$
\end{lem}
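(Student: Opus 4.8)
The plan is to regard $[\Lambda^s,g]f$ as a bilinear Fourier multiplier operator and to extract the cancellation present in its symbol through a Littlewood--Paley decomposition. On the Fourier side,
$$
\widehat{[\Lambda^s,g]f}(\xi)=\int_{\RR^d}\left(|\xi|^s-|\xi-\eta|^s\right)\widehat g(\eta)\,\widehat f(\xi-\eta)\,d\eta,
$$
so the operator has symbol $m(\xi,\eta)=|\xi|^s-|\xi-\eta|^s$, with $\eta$ the frequency of $g$ and $\xi-\eta$ that of $f$. I would split the frequency plane into three regimes according to which input carries the higher frequency: (a) $\mathrm{freq}(f)\gg\mathrm{freq}(g)$, (b) $\mathrm{freq}(g)\gg\mathrm{freq}(f)$, and (c) $\mathrm{freq}(g)\sim\mathrm{freq}(f)$, and estimate each contribution separately; the three pieces will produce, respectively, the first term, the second term, and (harmlessly) a piece again dominated by the second term on the right-hand side of the claimed inequality.

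Regimes (b) and (c) require no cancellation. In (b) one has $|\xi|\sim|\eta|$, hence $m(\xi,\eta)/|\eta|^s$ is, after the usual dyadic truncation, a homogeneous-degree-zero Coifman--Meyer symbol, which realizes this portion of the operator as a Coifman--Meyer bilinear multiplier applied to $(\Lambda^s g,f)$; the Coifman--Meyer theorem then yields $C\|\Lambda^s g\|_{L^{p_3}}\|f\|_{L^{p_4}}$. In (c) the product of two blocks $\Delta_j g\cdot\widetilde{\Delta}_j f$ has frequency support in a ball of radius $\sim 2^j$, so $\|\Lambda^s(\Delta_j g\,\widetilde{\Delta}_j f)\|_{L^p}\lesssim 2^{js}\|\Delta_j g\|_{L^{p_3}}\|\widetilde{\Delta}_j f\|_{L^{p_4}}$ (and likewise for $g\Lambda^s f$); summing in $j$ with Hölder's inequality, the Littlewood--Paley square function estimate and the Fefferman--Stein vector-valued maximal inequality gives once more $C\|\Lambda^s g\|_{L^{p_3}}\|f\|_{L^{p_4}}$.

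Regime (a) is where the commutator structure pays off. There $|\xi|\sim|\xi-\eta|\gg|\eta|$, so $\Lambda^s(gf)$ is to leading order $g\Lambda^s f$ and the two terms of the commutator nearly cancel; quantitatively, a first-order Taylor expansion of $|\cdot|^s$ about $\xi-\eta$ gives
$$
m(\xi,\eta)=|\eta|\,|\xi-\eta|^{s-1}\,\sigma(\xi,\eta),\qquad
\sigma(\xi,\eta):=\frac{|\xi|^s-|\xi-\eta|^s}{|\eta|\,|\xi-\eta|^{s-1}},
$$
and one checks that $\sigma$ is, on this regime and after dyadic truncation, a Coifman--Meyer symbol. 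Hence this part of $[\Lambda^s,g]f$ is a Coifman--Meyer bilinear operator applied to $(\nabla g,\Lambda^{s-1}f)$, and the Coifman--Meyer theorem gives $C\|\nabla g\|_{L^{p_1}}\|\Lambda^{s-1}f\|_{L^{p_2}}$ in the stated range of exponents; adding the three contributions proves the lemma. I expect the main obstacle to be the non-reflexive endpoints $p_1=\infty$ or $p_4=\infty$, where $L^p$-duality is unavailable and one must instead invoke the $BMO$-endpoint of the paraproduct/Coifman--Meyer estimates (Fefferman--Stein together with a $T(1)$-type argument); a secondary point is that $|\xi|^s$ has only limited smoothness at the origin, which is harmless because the Littlewood--Paley localization keeps every frequency block away from $\xi=0$, so the symbol bounds needed for Coifman--Meyer hold uniformly in the block. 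Alternatively, one may simply cite the now-standard literature on the Kato--Ponce inequality.
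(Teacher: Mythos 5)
The paper does not actually prove this lemma: it is stated in Appendix B as a recollection of the classical Kato--Ponce commutator estimate and used as a black box, which is precisely the fallback you mention in your last sentence. Your sketch, by contrast, outlines the standard paradifferential proof, and its architecture is sound: the trilinear symbol $m(\xi,\eta)=|\xi|^s-|\xi-\eta|^s$ is correct, the three-way frequency splitting is the right one, the first-order Taylor factorization $m=|\eta|\,|\xi-\eta|^{s-1}\sigma$ on the low-high regime is exactly how the gradient of $g$ and the loss of one derivative on $f$ are produced, and the high-low and high-high pieces indeed need no cancellation and land on the $\|\Lambda^s g\|_{L^{p_3}}\|f\|_{L^{p_4}}$ term. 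What you have is a correct road map rather than a proof: the claims that the truncated symbols satisfy the Coifman--Meyer derivative bounds $|\partial_\xi^a\partial_\eta^b\sigma|\lesssim(|\xi|+|\eta|)^{-|a|-|b|}$ are asserted, not checked (for $\sigma$ this requires using that $|\xi-\eta|\gg|\eta|$ on the support, and for non-integer $s$ one must track the limited regularity of $|\cdot|^s$ away from the truncation); the summation in the high-high regime needs the observation that the output block $\Delta_k$ only collects $j\gtrsim k$ so that $s>0$ makes the resulting $\ell^1$-convolution kernel summable, which you gesture at but do not write; and the $L^\infty$ endpoints $p_1=\infty$, $p_4=\infty$ genuinely fall outside the classical Coifman--Meyer range and require the more recent endpoint versions (Grafakos--Oh, Bourgain--Li type arguments), which you correctly flag as the main obstacle but do not resolve. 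Given that the paper itself only invokes the lemma with $s=1$ and $s=2$ and explicitly treats it as classical, either completing these verifications or simply citing Kato--Ponce/Kenig--Ponce--Vega (or Li's survey of Kato--Ponce inequalities) would be an acceptable way to close the argument.
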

\section{Interpolation inequalities for the Wiener's algebra}\label{sec9}
In this appendix we recall and prove several inequalities involving fractional Sobolev and the Wiener's algebra that may be interesting by themselves. 
\begin{lem}\label{lemmainterpolation}Assume that
$$
u\in L^1(\RR^d)\cap L^\infty(\RR^d)\cap \dot{H}^{d/2+\delta}(\RR^d).
$$
Then the following inequalities hold
\begin{align}\label{eq:10}
\|\hat{u}\|_{L^1(\RR^d)}&\leq C\|u\|_{L^1(\RR^d)}^{\frac{\delta}{d+\delta}}\|u\|_{\dot{H}^{d/2+\delta}(\RR^d)}^{\frac{d}{d+\delta}},\,\forall\, \delta>0,\\
\label{eq:10b}
\|\hat{u}\|_{L^1(\RR^d)}&\leq C\|u\|_{L^2(\RR^d)}^{\frac{\delta}{d/2+\delta}}\|u\|_{\dot{H}^{d/2+\delta}(\RR^d)}^{\frac{d/2}{d/2+\delta}},\,\forall\, \delta>0.
\end{align}
\end{lem}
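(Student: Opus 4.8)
The plan is to prove both inequalities at once by the classical frequency–splitting (Bernstein–type) argument, optimizing over the cutoff radius at the end. Fix $R>0$ and split
$$
\|\hat u\|_{L^1(\RR^d)}=\int_{|\xi|\le R}|\hat u(\xi)|\,d\xi+\int_{|\xi|>R}|\hat u(\xi)|\,d\xi=:A_R+B_R.
$$
For the high–frequency part I would insert the weight $|\xi|^{d/2+\delta}$ and apply the Cauchy–Schwarz inequality:
$$
B_R=\int_{|\xi|>R}|\xi|^{-(d/2+\delta)}\,|\xi|^{d/2+\delta}|\hat u(\xi)|\,d\xi\le\left(\int_{|\xi|>R}|\xi|^{-(d+2\delta)}\,d\xi\right)^{1/2}\left(\int_{\RR^d}|\xi|^{d+2\delta}|\hat u(\xi)|^2\,d\xi\right)^{1/2}.
$$
Since $d+2\delta>d$, the first integral converges (this is exactly where $\delta>0$ is used) and, passing to polar coordinates, equals $CR^{-2\delta}$; the second is $C\|u\|_{\dot H^{d/2+\delta}}^2$ by Plancherel. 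Hence $B_R\le CR^{-\delta}\|u\|_{\dot H^{d/2+\delta}}$.

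For the low–frequency part there are two estimates, one for each claimed inequality. Using $\|\hat u\|_{L^\infty}\le\|u\|_{L^1}$ together with $|B(0,R)|=CR^d$ gives $A_R\le CR^d\|u\|_{L^1}$; alternatively, Cauchy–Schwarz on $B(0,R)$ and Plancherel give $A_R\le|B(0,R)|^{1/2}\|\hat u\|_{L^2}\le CR^{d/2}\|u\|_{L^2}$.

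Combining the first low–frequency bound with the high–frequency bound yields
$$
\|\hat u\|_{L^1}\le C\left(R^d\|u\|_{L^1}+R^{-\delta}\|u\|_{\dot H^{d/2+\delta}}\right),
$$
and choosing $R=\bigl(\|u\|_{\dot H^{d/2+\delta}}/\|u\|_{L^1}\bigr)^{1/(d+\delta)}$ to balance the two terms gives \eqref{eq:10}. Combining instead the second low–frequency bound with the high–frequency bound yields
$$
\|\hat u\|_{L^1}\le C\left(R^{d/2}\|u\|_{L^2}+R^{-\delta}\|u\|_{\dot H^{d/2+\delta}}\right),
$$
and choosing $R=\bigl(\|u\|_{\dot H^{d/2+\delta}}/\|u\|_{L^2}\bigr)^{1/(d/2+\delta)}$ gives \eqref{eq:10b}. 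In the degenerate cases where one of $\|u\|_{L^1}$, $\|u\|_{L^2}$ or $\|u\|_{\dot H^{d/2+\delta}}$ vanishes, the corresponding inequality follows by letting $R\to\infty$ or $R\to 0$.

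There is essentially no serious obstacle in this argument; the only points requiring care are the convergence of $\int_{|\xi|>R}|\xi|^{-(d+2\delta)}\,d\xi$, which forces the restriction $\delta>0$, and the bookkeeping of exponents after the optimization over $R$, both of which are sketched above.
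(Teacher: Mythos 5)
Your proposal is correct and follows essentially the same route as the paper: the same frequency splitting at radius $R$, the same Cauchy--Schwarz estimate with the weight $|\xi|^{d/2+\delta}$ on the high frequencies, the same two low-frequency bounds via $\|\hat u\|_{L^\infty}\le\|u\|_{L^1}$ and via Cauchy--Schwarz plus Plancherel, and the same optimization over $R$. (Your choice $R=\bigl(\|u\|_{\dot H^{d/2+\delta}}/\|u\|_{L^2}\bigr)^{1/(d/2+\delta)}$ for the second inequality is the correct one; the paper's displayed $R$ there contains a typo with $\|u\|_{L^1}$ in the denominator.)
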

\begin{proof}
We have
\begin{align*}
\|\hat{u}\|_{L^1(\RR^d)}&=\int_{|\xi|<R}|\hat{u}(\xi)|d\xi+\int_{|\xi|>R}\frac{|\xi|^{d/2+\delta}}{|\xi|^{d/2+\delta}}|\hat{u}(\xi)|d\xi\nonumber\\
&\leq \|u\|_{L^1(\RR^d)}R^d\frac{\pi^{d/2}}{\Gamma\left(\frac{d}{2}+1\right)}+\|u\|_{\dot{H}^{d/2+\delta}(\RR^d)}\sqrt{\int_{|\xi|>R}|\xi|^{-d-2\delta}d\xi}\\
&\leq \|u\|_{L^1(\RR^d)}R^d\frac{\pi^{d/2}}{\Gamma\left(\frac{d}{2}+1\right)}+\|u\|_{\dot{H}^{d/2+\delta}(\RR^d)}\sqrt{\int_{R}^\infty r^{d-1-d-2\delta}dr}\\
&\leq \|u\|_{L^1(\RR^d)}R^d\frac{\pi^{d/2}}{\Gamma\left(\frac{d}{2}+1\right)}+\|u\|_{\dot{H}^{d/2+\delta}(\RR^d)}C_\delta R^{-\delta}
\end{align*}
With the choice 
$$
R=\left(\frac{\|u\|_{\dot{H}^{d/2+\delta}}}{\|u\|_{L^1}}\right)^{\frac{1}{d+\delta}}
$$ 
and we conclude the inequality \eqref{eq:10}. To prove the second inequality \eqref{eq:10b}, we compute
\begin{align*}
\|\hat{u}\|_{L^1(\RR^d)}&=\int_{|\xi|<R}|\hat{u}(\xi)|d\xi+\int_{|\xi|>R}\frac{|\xi|^{d/2+\delta}}{|\xi|^{d/2+\delta}}|\hat{u}(\xi)|d\xi\\
&\leq \|\hat{u}\|_{L^2(\RR^d)}\sqrt{R^d\frac{\pi^{d/2}}{\Gamma\left(\frac{d}{2}+1\right)}}+\|u\|_{\dot{H}^{d/2+\delta}(\RR^d)}C_\delta R^{-\delta}.
\end{align*}
Now, we can take 
$$
R=\left(\frac{\|u\|_{\dot{H}^{d/2+\delta}}}{\|u\|_{L^1}}\right)^{\frac{1}{d/2+\delta}}
$$
\end{proof}


\subsection*{Acknowledgment}
The author is partially supported by the Grant MTM2014-59488-P from the former Ministerio de Econom\'ia y Competitividad (MINECO, Spain).

\end{document}